  \newcolumntype{d}{D{.}{.}{-1}}
\newtheorem{theorem}{Theorem}
\newtheorem{lemma}{Lemma}
\newtheorem{remark}{Remark}
\newtheorem{definition}{Definition}
\newenvironment{proof}{{\it Proof. }}{\hfill $\Box$}
\newcommand{\Real}{\mathbb R}
\newcommand{\set}[1]{\left\{#1\right\}}
\newcommand{\real}[1]{{\mathbb R}^{#1}}
\newcommand{\be}{{\boldsymbol e}}
\newcommand{\bff}{{\boldsymbol f}}
\newcommand{\bh}{{\boldsymbol h}}
\newcommand{\bp}{{\boldsymbol p}}
\newcommand{\bq}{{\boldsymbol q}}
\newcommand{\bu}{{\boldsymbol u}}
\newcommand{\bv}{{\boldsymbol v}}
\newcommand{\bx}{\boldsymbol x}
\newcommand{\bxf}{{\bx(\cdot)}}  
\newcommand{\buf}{{\bu(\cdot)}}  
\newcommand{\bA}{{\boldsymbol A}}
\newcommand{\bD}{{\boldsymbol D}}
\newcommand{\bG}{{\boldsymbol G}}
\newcommand{\bK}{{\boldsymbol K}}
\newcommand{\bQ}{{\boldsymbol Q}}
\newcommand{\X}{\mathbb{X}}
\newcommand{\U}{\mathbb{U}}
\newcommand{\bzero}{{\bf 0}}
\newcommand{\bnu}{{\mbox{\boldmath $\nu$}}}
\newcommand{\bmu}{{\mbox{\boldmath $\mu$}}}
\newcommand{\blam}{{\mbox{\boldmath $\lambda$}}}
\newcommand{\bomega}{\mbox{\boldmath$\omega$}}
\author{ %
I. M. Ross\thanks{Author of the textbook, \textit{A Primer on Pontryagin's Principle on Optimal Control}, Collegiate Publishers, San Francisco, CA, 2015. } \\
\textit{Monterey, CA 93940}
}
\title{Enhancements to the DIDO$^\copyright$ Optimal Control Toolbox}
\abstract{
In 2020, DIDO$^\copyright$ turned 20! The software package emerged in 2001 as a basic, user-friendly MATLAB$^\circledR$ teaching-tool to illustrate the various nuances of Pontryagin's Principle but quickly rose to prominence in 2007 after NASA announced it had executed a globally optimal maneuver using DIDO.  Since then, the toolbox has grown in applications well beyond its aerospace roots: from solving problems in quantum control to ushering rapid, nonlinear sensitivity-analysis in designing high-performance automobiles.  Most recently, it has been used to solve continuous-time traveling-salesman problems. Over the last two decades, DIDO's algorithms have evolved from their simple use of generic nonlinear programming solvers to a multifaceted engagement of fast spectral Hamiltonian programming techniques. A description of the internal enhancements to DIDO that define its mathematics and algorithms are described in this paper. A challenge example problem from robotics is included to showcase how the latest version of DIDO is capable of escaping the trappings of a ``local minimum'' that ensnare many other trajectory optimization methods.

}
\begin{document}
\maketitle

\section{Introduction}\label{sec:intro}

On November 5, 2006, the International Space Station executed a globally optimal maneuver that saved NASA \$1,000,000\cite{zpm:NASA-report}.  This historic event added one more first\cite{zpm:first} to many of NASA's great accomplishments. It was also a first flight demonstration of DIDO$^\copyright$, which was then a basic optimal control solver.  Because DIDO was based on a then new pseudospectral (PS) optimal control theory, the concept emerged as a big winner in computational mathematics as headlined on page~1 of \textit{SIAM News}\cite{SIAMpg1}. Subsequently, \textit{IEEE Control Systems Magazine} did a cover story on the use of PS control theory for attitude guidance\cite{zpm:IEEE}. In exploiting the widespread publicity in the technical media (see Fig.~\ref{fig:ZPMcollage}) ``new'' codes based on naive PS discretizations materialized with varying degrees of disingenuous claims. Meanwhile, the algorithm implemented in DIDO moved well beyond its simple use of generic nonlinear programming solvers\cite{lncis,knots} to a multifaceted guess-free, spectral algorithm\cite{guess-free,spec-alg}. That is, while the 2001-version of DIDO was indeed a simple code that patched PS discretization to a generic nonlinear programming solver, it quietly and gradually evolved to an entirely new approach that we simply refer to as  \emph{\textbf{DIDO's algorithm}}.  DIDO's algorithm is actually a suite of  algorithms that work in unison and focused on three key performance elements:
\begin{enumerate}
\item Extreme robustness to the point that a ``guess'' is not required from a user\cite{guess-free};
\item Spectral acceleration based on the principles of discrete cotangent tunneling (see Theorem~\ref{theorem:tunnel} in Appendix A of this paper); and
\item Verifiable accuracy of the computed solution in terms of feasibility and optimality tests\cite{ross-book}.
\end{enumerate}
Despite its current internal changes, the usage of DIDO from a user's perspective has remained virtually the same since 2001. This is because DIDO has remained true to its founding principle: to provide a minimalist's approach for solving optimal control problems. In showcasing this philosophy, we first describe DIDO's format of an optimal control problem.  This format closely resembles writing the problem by hand (``pen-to-paper''). The input to DIDO is just the problem definition. The outputs of DIDO include the collection of necessary conditions generated by applying Pontryagin's Principle\cite{ross-book}.  Because the necessary conditions are in the \emph{\textbf{outputs}} of DIDO (and not its inputs!) a user may apply Pontryagin's Principle to ``independently'' test the optimality of the computed solution.

The input-output features of DIDO are in accordance with Pontryagin's Principle whose fundamentals have largely remained unchanged since its inception, circa 1960\cite{pontryagin}. Appendixes~A and B describe the mathematical and algorithmic details of DIDO; however, no knowledge of its inner workings is necessary to employ the toolbox effectively.  The only apparatus that is needed to use DIDO to its full capacity is a clear understanding of Pontryagin's Principle.  DIDO is based on the philosophy that a good software must solve a problem as efficiently as possible with the least amount of iterative hand-tuning by the user.   This is because it is more important for a user to focus on solving the ``problem-of-problems'' rather than expend wasteful energy on the minutia of coding and dialing knobs\cite{ross-book}.  These points are amplified in Sec.~\ref{sec:robotics} of this paper by way of a challenge problem from robotics to show how DIDO can escape the trappings of a ``local minimum'' and emerge on the other side to present a viable solution.

\section{DIDO's Format and Structures for a Generic Problem}\label{sec:DIDO-i/o}
A generic two-event\footnote{The two-event problem may be viewed as an ``elementary'' continuous-time traveling salesman problem\cite{TSP20, ACC19, nolcos16}. } optimal control problem may be formulated as\footnote{Although the notation used here is fairly self-explanatory, see \cite{ross-book} for additional clarity.}
%
\newlength{\leftside}
\newlength{\rightside}
\newcommand*{\leftterm}{}
\newcommand*{\rightterm}{}
\newcommand*{\term}[1]{$\displaystyle#1$}
\begin{align*}
&\left.\begin{aligned}
\phantom{problem}
\X & \subset \real{N_x}   &\U & \subset \real{N_u}& \\
\bx &= (x_1, \ldots, x_{N_x})   &\bu &= (u_1, \ldots, u_{N_u}) & \\
\end{aligned}\hspace{0.01cm} \right\} \ \text{\emph{\textbf{preamble}}}
\\[0.5em]
%
&\begin{aligned}
\renewcommand*{\leftterm}{\phantom{J[\bx(\cdot), \bu(\cdot), t_0, t_f]}}
\renewcommand*{\rightterm}{\phantom{(\bx_0, x t_f) }}
\settowidth{\leftside}{\term{\leftterm}}
\settowidth{\rightside}{\term{\rightterm}}
\overbrace{(\bG)}^{\text{\normalsize \emph{\textbf{problem}}}} \left\{
\begin{array}{ll}
\text{Minimize}& \\[-1.20em] 
& \left.\begin{aligned}
\makebox[\leftside][l]{\term{\phantom{J[\bx(\cdot), \bu(\cdot), t_0,]} }} & \\[0.5em]
\makebox[\leftside][r]{\term{J[\bx(\cdot), \bu(\cdot), t_0, t_f, \bp] := }} & \\[0.5em]
 \makebox[\leftside][r]{\term{E(\bx_0, \bx_f, t_0, t_f, \bp)}} &  \\[0.5em]
  \makebox[\leftside][l]{\term{+ \int_{t_0}^{t_f}F(\bx(t), \bu(t), t, \bp)\ dt }} & \makebox[\rightside][l]{\term{\phantom{a}}}\\
 \end{aligned}\right\} \ \text{\textbf{\emph{cost}}}\\ [3.7em]
\text{Subject to} &  
\\[-1.2em]
&  \left.\begin{aligned}
    \makebox[\leftside][l]{\term{\dot\bx = \bff(\bx(t), \bu(t), t, \bp)}} & \makebox[\rightside][l]{\term{\phantom{a}}} \\
  \end{aligned}\right\}\ \text{\emph{\textbf{dynamics}}} \\ [1em]
  %
&  \left.\begin{aligned}
    \makebox[\leftside][r]{\term{\be^L \le \be(\bx_0, \bx_f, t_0, t_f, \bp) \le \be^U}}
   &    \makebox[\rightside][l]{\term{\phantom{a}}}
    \\
  \end{aligned}\right\} \ \text{\emph{\textbf{events}}} \\[1em]
  &  \left.\begin{aligned}
   \makebox[\leftside][r]{\term{\bh^L \le \bh(\bx(t), \bu(t), t, \bp) \le \bh^U}}
   & \makebox[\rightside][l]{\term{\phantom{a}}}\\
  \end{aligned}\right\} \ \text{\emph{\textbf{path}}} \\[1em]
    &  \left.\begin{aligned}
   \makebox[\leftside][r]{\term{\bK^L \le \bK[\bx(\cdot), \bu(\cdot), t_0, t_f, \bp]\ }}
   & \makebox[\rightside][l]{\term{ \le \bK^U}}\\
  \end{aligned}\right\} \ \text{\emph{\textbf{functional}}}
\end{array}
\right.
\end{aligned}
\end{align*}
%
The basic continuous-time optimization variables are packed in the DIDO structure called \textsf{primal} according to:
\begin{subequations}\label{eq:primal_def_1of2}
\begin{align}
&\textsf{primal}.\textbf{states}    & \longleftarrow  &&&    \bx(\cdot)\\
&\textsf{primal}.\textbf{controls}  & \longleftarrow  &&&    \bu(\cdot)\\
&\textsf{primal}.\textbf{time}      & \longleftarrow   &&&    t
\end{align}
\end{subequations}
Additional optimization variables packed in \textsf{primal} are:
\begin{subequations}\label{eq:primal_def_2of2}
\begin{align}
&\textsf{primal}.\textbf{initial.states}    & \longleftarrow  &&&    \bx_0\\
&\textsf{primal}.\textbf{initial.time}  & \longleftarrow  &&&    t_0\\
&\textsf{primal}.\textbf{final.states}      & \longleftarrow   &&&    \bx_f \\
&\textsf{primal}.\textbf{final.time}      &\longleftarrow &&& t_f\\
&\textsf{primal}.\textbf{parameters}& \longleftarrow  &&&    \bp
\end{align}
\end{subequations}
The names of the five functions that define Problem~$(\bG)$ are stipulated in the structure \textsf{problem} according to:
\begin{subequations}\label{eq:problem.functions=def}
\begin{align}
&\textsf{problem}.\textbf{cost}    & \leftarrow  &&& [E(\bx_0, \bx_f, t_0, t_f, \bp ), \nonumber\\
&                                   &                 &&& \quad F(\bx(t), \bu(t), t, \bp)]\\
&\textsf{problem}.\textbf{dynamics}  & \leftarrow  &&&    \bff(\bx(t), \bu(t), t, \bp)\\
&\textsf{problem}.\textbf{events}      & \leftarrow   &&&    \be(\bx_0, \bx_f, t_0, t_f, \bp) \\
&\textsf{problem}.\textbf{path}      &\leftarrow &&& \bh(\bx(t), \bu(t), t, \bp) & \text(optional)\\
&\textsf{problem}.\textbf{functionals}& \leftarrow  &&&    \bK[\bx(\cdot), \bu(\cdot), t_0, t_f, \bp] &\text(optional)
\end{align}
\end{subequations}
The bounds on the constraint functions of Problem~$(\bG)$ are also associated with the structure \textsf{problem} but under its fieldname of \textbf{bounds} (i.e. \textsf{problem}.\textbf{bounds}) given by\footnote{Because some DIDO clones still employ the old DIDO format (e.g.,  ``bounds.lower'' and ``bounds.upper''), the user-specific codes must be remapped to run on such imitation software. },
\begin{subequations}
\begin{align}
&\textsf{bounds}.\textbf{events}      & \leftarrow   &&&    [\be^L, \be^U] \\
&\textsf{bounds}.\textbf{path}      &\leftarrow &&& [\bh^L, \bh^U]  & \text(optional)\\
&\textsf{bounds}.\textbf{functionals}& \leftarrow  &&&   [\bK^L, \bK^U] &\text(optional)
\end{align}
\end{subequations}
Although it is technically unnecessary, a \emph{\textbf{preamble}} is highly recommended and included in all the example problems that come with the full toolbox.\footnote{DIDOLite, the free version of DIDO, has limited capability.}  The preamble is just a simple way to organize all the primal variables in terms of symbols and notation that have a useful meaning to the user.  In addition, the field \textbf{constants} is included in both \textsf{primal} and \textsf{problem} so that problem-specific constants can be managed with ease. Thus, \textsf{problem}.\textbf{constants} is where a user defines the data that can be accessed anywhere in the user-specific files by way of \textsf{primal}.\textbf{constants}.

A ``search space'' must be provided to DIDO using the field \textbf{search} under \textsf{problem}. The search space for states and controls are specified by
\begin{subequations}\label{eq:search-space}
\begin{align}
&\textsf{search}.\textbf{states}      & \leftarrow   &&&    [\bx^{LL},\quad \bx^{UU}] \\
&\textsf{search}.\textbf{controls}      &\leftarrow &&& [\bu^{LL}, \quad \bu^{UU}]
\end{align}
\end{subequations}
where, $(\bx, \bu)^{LL}$ and $(\bx, \bu)^{UU}$ are nonactive bounds on the state and control variables; hence, \textbf{\emph{they must be nonbinding constraints}}.

The inputs to DIDO are simply two structures given by \textsf{problem} and \textsf{algorithm}.
The \textsf{algorithm} input provides some minor ways to test certain algorithmic options.  The reason the algorithm structure is not extensive is precisely this: To maintain the DIDO philosophy of 
not wasting time in fiddling with various knobs that only serves to distract a user from solving the problem.  That is, it is not good-enough for DIDO to run efficiently; it must also provide a user a quick and easy approach in taking a problem from \emph{\textbf{concept-to-code}}\footnote{Warning: The quick and easy concept-to-code philosophy does not mean a user should code a problem first and ask questions later.  DIDO is user-friendly, but it's not for dummies! Nothing beats analysis before coding; see Sec.~4.1.4 of \cite{ross-book} for a simple ``counter'' example and Sec.~\ref{sec:robotics} of this paper for the recommended process.}.

Before taking a concept to code fast, it is critically important to perform a mathematical analysis of the problem; i.e., an investigation of the necessary conditions.  This investigation is necessary (pun intended!) not only to understand DIDO's outputs but also to eliminate common conceptual errors that can easily be detected by a quick rote application of Pontryagin's Principle.
%
   \begin{figure}[h!]
      \centering
      \framebox{\parbox{0.9\columnwidth}{
      \centering
      {\includegraphics[angle=0, width = 0.9\columnwidth]{ZPMCollage3}}
      \caption{\textsf{Sample media coverage of NASA's successful flight operations using PS optimal control theory as implemented in DIDO.}}
    \label{fig:ZPMcollage}
    }}
   \end{figure}
%

Although proving Pontryagin's Principle is quite hard, its application to any problem is quite easy!  See \cite{ross-book} for details on the ``HAMVET'' process.  The first step in this process is to construct the Pontryagin Hamiltonian. For Problem~$(\bG)$ this is given by\footnote{We ignore the functional constraints ($\bK[\cdot]$) in the rest of this paper for simplicity of presentation.},
\begin{equation}\label{eq:H=bydef}
H(\blam, \bx, \bu, t, \bp) := F(\bx, \bu, t, \bp) + \blam^T\bff(\bx, \bu, t, \bp)
\end{equation}
That is, constructing \eqref{eq:H=bydef} is as simple as taking a dot product. Forming the \emph{\textbf{Lagrangian of the Hamiltonian}} requires taking another dot product given by,
\begin{equation}\label{eq:Hbar=bydef}
\overline{H}(\bmu, \blam, \bx, \bu, t, \bp) := H(\blam, \bx, \bu, t, \bp) + \bmu^T\bh(\bx, \bu, t, \bp)
\end{equation}
where, $\bmu$ is a path covector (multiplier) that satisfies the \emph{\textbf{complementarity conditions}}, denoted by $\bmu\dagger\bh $, and given by,
\begin{equation}\label{eq:comp-h}
\bmu\dagger \bh \Leftrightarrow \mu_{i} \ \left\{\begin{array}{ccc}
  \le 0 & \ \text{if} \ \  & h_i(\bx, \bu, t, \bp) = h_i^L \\
  = 0 & \ \text{if} \ \ & h_i^L < h_i(\bx, \bu, t, \bp) < h_i^U \\
  \ge 0 & \ \text{if} \ \   & h_i(\bx, \bu, t, \bp) = h_i^U \\
  unrestricted & \ \text{if} \ \   & h_i^L = h_i^U
\end{array}  \right.
\end{equation}
Constructing the \emph{\textbf{adjoint equation}} now requires some calculus of generating gradients:
\begin{equation}
-\dot\blam = \frac{\partial \overline{H}}{\partial \bx}
\end{equation}
The adjoint covector (costate) satisfies the \emph{\textbf{tranversality conditions}} given by,
\begin{align}
- \blam(t_0) &= \frac{\partial \overline{E}}{\partial \bx_0}
&\blam(t_f) &= \frac{\partial \overline{E}}{\partial \bx_f}
\end{align}
where, $\overline{E}$ is the \emph{\textbf{Endpoint Lagrangian}} constructed by taking yet another dot product given by,
\begin{multline}\label{eq:Ebar=bydef}
\overline{E}(\bnu, \bx_0, \bx_f, t_0, t_f, \bp) := E(\bx_0, \bx_f, t_0, t_f, \bp) \\+ \bnu^T\be(\bx_0, \bx_f, t_0, t_f, \bp)
\end{multline}
and $\bnu$ is the endpoint covector that satisfies the complementarity condition $\bnu\dagger\be$ (see \eqref{eq:comp-h}). The values of the  Hamiltonian at the endpoints also satisfy transversality conditions known as the \emph{\textbf{Hamiltonian value conditions}} given by,
\begin{align}
\mathcal{H}[@t_0] &= \frac{\partial \overline{E}}{\partial t_0}
&-\mathcal{H}[@t_f] &= \frac{\partial \overline{E}}{\partial t_f} &(\text{and } \bnu\dagger\be )
\end{align}
where, $\mathcal{H}$ is the \emph{\textbf{lower Hamiltonian}}\cite{ross-book,clarke-2013book}. The \emph{\textbf{Hamiltonian minimization condition}}, which is frequently and famously stated as
\begin{equation}\label{eq:HMC}
\begin{array}{cc}
  (\text{HMC})& \left\{\begin{array}{ll}
    \displaystyle\mathop\text{Minimize }_\bu & H(\blam, \bx, \bu, t, \bp) \\
    \text{Subject to} & \bu \in \mathbb{U} \
  \end{array}\right.
\end{array}
\end{equation}
reduces to the function-parameterized nonlinear optimization problem given by,
\begin{equation}\label{eq:HMC=NLP}
\begin{array}{cc}
  (\text{HMC for (\textbf{G})})& \left\{\begin{array}{ll}
    \displaystyle\mathop\text{Minimize }_\bu & H(\blam, \bx, \bu, t, \bp) \\
    \text{Subject to} & \bh^{L} \leq \bh(\bx, \bu, t, \bp)\leq \bh^{U}
  \end{array}\right.
\end{array}
\end{equation}
The necessary conditions for the subproblem stated in \eqref{eq:HMC=NLP} are given by the Karush-Kuhn-Tucker (KKT) conditions:
\begin{align}
\frac{\partial \overline{H}}{\partial \bu} &= \bzero   & \bmu \dagger \bh
\end{align}
Similarly, the necessary condition for selecting an optimal parameter $\bp$ is given by,
\begin{equation}\label{eq:HMCfix4p}
\frac{\partial \overline{E}}{\partial \bp} + \displaystyle \int_{t_0}^{t_f} \left(\frac{\partial \overline{H}}{\partial \bp}\right)\, dt = \bzero
\end{equation}
Finally, the \emph{\textbf{Hamiltonian evolution equation}},
\begin{equation}
\frac{d \mathcal{H}}{dt} = \frac{\partial \overline{H}}{\partial t}
\end{equation}
completes the HAMVET process defined in \cite{ross-book}.

All of these necessary conditions can be easily checked by running DIDO using the single line command,
\begin{equation}\label{eq:callingDIDO}
[\textsf{cost}, \textsf{primal}, \textsf{dual}] = \text{\textsf{\textbf{dido}}\big(\textsf{problem}, \textsf{algorithm}\big)}
\end{equation}
where \textsf{cost} is the (scalar) value of $J$, \textsf{primal} is as defined in \eqref{eq:primal_def_1of2} and \eqref{eq:primal_def_2of2}, and \textsf{dual} is the \emph{\textbf{output}} structure of DIDO that packs the entirety of the dual variables in the following format:
\begin{subequations}\label{eq:dual.stuff}
\begin{align}
&\textsf{dual}.\textbf{Hamiltonian}    & \longleftarrow  &&&    \mathcal{H}[@ t]\\
&\textsf{dual}.\textbf{dyanmics}  & \longleftarrow  &&&    \blam(\cdot)\\
&\textsf{dual}.\textbf{events}      & \longleftarrow   &&&    \bnu \\
&\textsf{dual}.\textbf{path}& \longleftarrow  &&&    \bmu(\cdot)
\end{align}
\end{subequations}
\textbf{\emph{Note that the necessary conditions are not supplied to DIDO}}; rather, the code does all of the hard work in generating the dual information.  It is the task of the user to check the optimality of the computed solution, and potentially discard the answer should the candidate optimal solution fail the test.

\section{Overview of DIDO's Algorithm}\label{sec:overview}
From Secs.~\ref{sec:intro} and \ref{sec:DIDO-i/o}, it is clear that DIDO interacts with a user in much the same way as a direct method but generates outputs similar to an indirect method.  In this regard, \emph{\textbf{DIDO is both a direct and an indirect method}}.  \emph{Alternatively, DIDO is neither a direct nor an indirect method! } To better understand the last two statements and DIDO's unique suite of algorithms, recall from Sec.~\ref{sec:DIDO-i/o} that the totality of variables in an optimal control problem are,
$$\bxf, \buf, t_0, t_f, \bp, \blam(\cdot), \bmu(\cdot), \ \text{and }\ \bnu$$
Internally, DIDO introduces two more function variables $\bv(\cdot)$ and $\bomega(\cdot)$  by rewriting the dynamics and adjoint equations as,\footnote{This rewriting is done after a domain transformation; see Appendix~A  for details.}
\begin{subequations}
\begin{align}
\dot\bx  &= \bv; & \bv  &= \partial_{\boldsymbol\lambda} \overline{H} \\
-\dot\blam  &= \bomega; & \bomega  &= \partial_{\boldsymbol x} \overline{H}
\end{align}
\end{subequations}
The quantities $\bv$ and $\bomega$ are called \textbf{\emph{virtual and co-virtual}}  variables respectively. Thus, all nonlinear differential equations are split into linear differentials and nonlinear algebraic components at the price of adding two new variables.  The computational cost for adding these two variables is quite cheap; see Sec.~\ref{sec:misc}.\ref{sec:memory-reqs} for details.  The linear components are handled separately and efficiently through the use of $N \in \mathbb{N}$ Birkhoff basis functions\cite{lorentz,finden,wang,birk-koeppen,birk-TN}.  DIDO then focuses on solving the nonlinear equations with the linear equations tagging along for feasibility. Of all the equations, special attention is paid to the the Hamiltonian minimization condition; see \eqref{eq:HMC=NLP}.  This is because the gradient of the cost function is damped by a ``step size'' when associated with the gradient of the Hamiltonian\cite{ross:direct-shooting}.  Thus, unlike nonlinear programming methods that treat all variables the same way, DIDO's \textbf{\emph{Hamiltonian programming method}} treats different variables differently.  This is because DIDO generates processes that are centered around a Hamiltonian while a nonlinear programming algorithm is based on equations generated by a Lagrangian. Because \textbf{\emph{a Hamiltonian is not a Lagrangian}}, DIDO treats a state variable differently than a control variable. Similarly, a dynamics function is handled differently than a path-constraint function. These differences in treatments of different functions and constraints are in sharp contrast to nonlinear programming methods where all variables and functions are treated identically.\footnote{Although nonlinear programming algorithms may treat linear and nonlinear constraints differently, this concept is agnostic to the differences betweens state and control variables.} The mathematical details of this computational theory are described in Appendix~A.

In addition to treating the optimal control variables differently, DIDO also treats the Birkhoff pseudospectral problem differently for different values of $N$ and the ``state'' of the iteration. As noted in Sec.~\ref{sec:intro}, DIDO's algorithm is actually a suite of algorithms.
There are three major algorithmic components to DIDO's main algorithm.  A schematic that explains the three components is shown in Fig.~\ref{fig:DIDOMain3}.  The basic idea behind the three-component formula is to address the three different objectives of DIDO, namely, to be \emph{\textbf{guess-free, fast and accurate}}.\footnote{The word fast is used here in the sense of true-fast; i.e. fast that is agnostic to the specifics of computer implementations. A more precise meaning of fast is defined in Appendix B.}
%
   \begin{figure}[h!]
      \centering
      \framebox{\parbox{0.9\columnwidth}{
      \centering
      {\includegraphics[angle=0, width = 0.9\columnwidth]{DIDOMain3}}
      \caption{\textsf{A schematic for the concept of the three main algorithmic components of DIDO.}}
    \label{fig:DIDOMain3}
    }}
   \end{figure}
%
The functionality of the three major components of DIDO are defined as follows:
\begin{enumerate}
\item The stabilization component: The task of this component of the algorithm is to drive an ``arbitrary'' point to an ``acceptable'' starting point for the acceleration component.
\item The acceleration component: The task of this suite of algorithms is to rapidly guide the sequence of functional iterates to a capture zone of the accurate component.
 \item The accurate component: The task of this component is to generate and refine a solution that satisfies the precision requested by the user.
\end{enumerate}
%
DIDO's main algorithm ties these three components together by monitoring their progress (i.e., ``state'' of iteration) and triggering automatic switching between components based on various criteria.  Details of all three components of DIDO are described in Appendix B.

\section{Best Practices in Using DIDO}\label{sec:best-practices}
Despite the multitude of internal algorithms that comprise DIDO's algorithm, \emph{\textbf{the only concept that is needed to use DIDO effectively is Pontryagin's Principle\cite{ross-book}}}. That is, the details of DIDO's algorithms are quite unnecessary to use DIDO effectively.  Best practices in producing a good DIDO application code in the least amount of time are:
\begin{enumerate}
\item Using Pontryagin's Principle to generate the necessary conditions for a given problem as outlined in Sec.~\ref{sec:DIDO-i/o}.
\item Determining if the totality of necessary conditions are free of concept/computational problems; for example, square roots, division by zero etc. If yes, then reformulating the problem as discussed in \cite{ross-book}, Chs.~1 and 3.
\item Identifying the subset of the necessary conditions that can be easily checked.  Examples are constant/linear costate predictions, switching conditions etc.
\item Scaling and balancing the equations.  A detailed process for scaling and balancing is described in \cite{scaling}.
\end{enumerate}
If a problem is not well-balanced, DIDO will execute several subalgorithms to generate an extremal solution. This will effectively bypass its main acceleration component. \emph{\textbf{Tell-tale signs of imbalanced equations are if the costates and/or Hamiltonian values are very large or very small; e.g., $10^{\pm9}$}.}

Once DIDO generates a candidate solution, it is critical for a user to \emph{\textbf{verify and validate (V\&V)}} the computed solution.  Best practices for post computation are:
\begin{enumerate}
\item Independently testing the feasibility of the computed solution. If the solution is not feasible, it is not optimal!
\item Testing the optimality of the computed solution using the dual outputs generated by DIDO.
\end{enumerate}
DIDO also contains a number of other computational components such as the DIDO Doctor Toolkit$^\text{TM}$ to assist the user in diagnosing and helping avoid certain common mistakes in coding and problem formulation. Additional features continue to be added, particularly when new theoretical advancements are made and/or new improvements in computational procedures are possible.

\section{Solving A Challenge Problem from Robotics}\label{sec:robotics}
One of the earliest applications of DIDO that went beyond its aerospace boundaries were problems in robotics\cite{lewis:nato}.  A motion planning problem with obstacle avoidance can easily be framed as an optimal control problem with path constraints\cite{lewis:cdc, infotech}.  A particular time-optimal motion planning problem for a \emph{\textbf{differential-drive robot}} can be framed as:
%
\begin{align*}
&\left.\begin{aligned}
\phantom{problem}
\X & = \real{3}   &\U & = \real{2}& \\
\bx &= (x, y, \theta)   &\bu &= (\omega_r, \omega_l) & \\
\end{aligned}\hspace{1.65cm} \right\} \ \text{\emph{\textbf{preamble}}}
\\[0.5em]
%
&\begin{aligned}
\renewcommand*{\leftterm}{\phantom{ J[\bx(\cdot), \bu(\cdot), t_0, t_f]}}
\renewcommand*{\rightterm}{\phantom{(\bx_0, x t_f) }}
\settowidth{\leftside}{\term{\leftterm}}
\settowidth{\rightside}{\term{\rightterm}}
\overbrace{(R)}^{\text{\normalsize \emph{\textbf{problem}}}} \left\{
\begin{array}{ll}
\text{Minimize}& \\[-1.20em] 
& \left.\begin{aligned}
\makebox[\leftside][l]{\term{J[\bx(\cdot), \bu(\cdot), t_0, t_f] }} & \\[0.5em]
 \makebox[\leftside][r]{\term{:= t_f-t_0}} & \\ 
 \end{aligned}\hspace{1.25cm} \right\} \ \text{\textbf{\emph{cost}}}\\ [2.7em]
\text{Subject to} &  
\\[-1.75em]
&  \left.\begin{aligned}
    \makebox[\leftside][l]{\term{ \dot x = \frac{\cos\theta}{2}(\omega_r + \omega_l)}} & \makebox[\rightside][l]{\term{\phantom{a}}} \\[0.5em]
    \makebox[\leftside][l]{\term{ \dot y = \frac{\sin\theta}{2}(\omega_r + \omega_l)}} & \makebox[\rightside][l]{\term{\phantom{a}}} \\[0.3em]
    \makebox[\leftside][l]{\term{ \dot\theta = c(\omega_r - \omega_l)}} & \makebox[\rightside][l]{\term{\phantom{a}}} \\
  \end{aligned}\right\}\ \text{\emph{\textbf{dynamics}}} \\ [3em]
  %
&  \left.\begin{aligned}
    \makebox[\leftside][l]{\term{ \bx(0) = (x^0, y^0, \theta^0) }} &
    \makebox[\rightside][l]{\term{\phantom{a}}}\\
    \makebox[\leftside][l]{\term{ \bx(t_f) = (x^f, y^f, \theta^f) }} &
    \makebox[\rightside][l]{\term{\phantom{a}}}\\
  \end{aligned}\right\} \ \text{\emph{\textbf{events}}} \\[1em]
  &  \left.\begin{aligned}
   \makebox[\leftside][r]{\term{ (x-x_i)^2 + (y-y_i)^2 - r_i^2 \ge 0 }}
   & \makebox[\rightside][l]{\term{\phantom{a}}}\\[0.5em]
   \makebox[\leftside][r]{\term{ i= 1, \ldots, N_{obs} }}
   & \makebox[\rightside][l]{\term{\phantom{a}}}\\[0.5em]
    \makebox[\leftside][r]{\term{ \omega^L \le \omega_{r, l} \le \omega^U  }}
   & \makebox[\rightside][l]{\term{\phantom{a}}}\\[0.5em]
  \end{aligned}\right\} \ \text{\emph{\textbf{path}}} \\[1em]
\end{array}
\right.
\end{aligned}
\end{align*}
%
%
%
where, $\bx := (x, y, \theta)$ is the state of a differential-drive robot, $\bu = (\omega_r, \omega_l)$ is the control and $c$ is a constant\cite{lynch-park:book}. Obstacles the robot must avoid during navigation are modeled as circular path constraints\cite{lewis:cdc} of radius $R_i$ centered at $(x_i, y_i), \ i = 1, \ldots, N_{obs}$.  The robot itself has a ``bubble'' of radius $R_0$; hence, $r_i:= R_i + R_0$.

Although it is very tempting to take the posed problem and code it up in DIDO, this is highly \emph{\textbf{not recommended}} as noted in Sec.~\ref{sec:DIDO-i/o}.  This is because \emph{DIDO is not an application software}; rather, it is a mathematical toolbox for solving a generic optimal control problem (see Problem~$(G)$ posed in Sec.~\ref{sec:intro}). Thus, a user is creating an ``app'' using DIDO as the ``operating system.'' Consequently, an analysis of the posed optimal control problem is \emph{\textbf{strongly recommended}} before coding. This advice is in sharp contrast to the brute-force advocacy of ``nonlinear programming methods for optimal control.''

\subsection{Step 1: Pre-Code Analysis}\label{sec:pre-code-analysis}
A pre-code analysis is essentially an analysis of the posed problem using Pontryagin's Principle\cite{ross-book}.  In following the ``HAMVET'' procedure enunciated in Sec.~\ref{sec:DIDO-i/o}, we first construct the Pontryagin Hamiltonian,
\begin{equation}
H(\blam, \bx, \bu):= \frac{\lambda_x \cos\theta + \lambda_y \sin\theta}{2}(\omega_r + \omega_l)
+ c \lambda_\theta (\omega_r - \omega_l)
\end{equation}
where $\blam := (\lambda_x, \lambda_y, \lambda_\theta)$ is an adjoint covector (costate).  The Lagrangian of the Hamiltonian is given by,
\begin{multline}
\overline{H}(\bmu, \blam, \bx, \bu) := H(\blam, \bx, \bu) + \sum_{i=1}^{N_{obs}} \mu_i (x-x_i)^2 + (y - y_i)^2 \\
+ \mu_r \omega_r + \mu_l \omega_l
\end{multline}
where, $\bmu := (\mu_1, \ldots, \mu_{N_{obs}}, \mu_r, \mu_l)$ satisfies the complementarity conditions,
\begin{subequations}
\begin{align}
\mu_{i} & \left\{\begin{array}{ccc}
  \le 0 & \ \text{if} \ \  & (x-x_i)^2 + (y - y_i)^2 = r_i^2 \\
  = 0 & \ \text{if} \ \ & (x-x_i)^2 + (y - y_i)^2  > r_i^2
\end{array}  \right. \label{eq:mu_i=comp}\\[0.5em]
\mu_{r,l} & \left\{\begin{array}{ccc}
  \le 0 & \ \text{if} \ \  & \omega_{r,l} = \omega^L \\
  = 0 & \ \text{if} \ \ & \omega^L < \omega_{r,l} < \omega^U \\
  \ge 0 & \ \text{if} \ \  & \omega_{r,l} = \omega^U
\end{array}  \right.\label{eq:mu=bb-maybe}
\end{align}
\end{subequations}
The adjoint equations are now given by,
\begin{subequations}\label{eq:adj4R}
\begin{align}
-\dot\lambda_x  &:= \frac{\partial \overline{H}}{\partial x}= \sum_{i=1}^{N_{obs}} 2 \mu_i (x-x_i) \\
-\dot\lambda_y  &:= \frac{\partial \overline{H}}{\partial y}= \sum_{i=1}^{N_{obs}} 2 \mu_i (y-y_i) \\
-\dot\lambda_\theta &:= \frac{\partial \overline{H}}{\partial \theta}= \frac{-\lambda_x \sin\theta + \lambda_y \cos\theta}{2}(\omega_r + \omega_l)
\end{align}
\end{subequations}
Hence, from \eqref{eq:adj4R} and \eqref{eq:mu_i=comp}, we get,
\begin{equation}\label{eq:VV-1}
\boxed{
(\dot\lambda_x, \dot\lambda_y) = (0, 0) \quad \text{if }  \mu_i = 0  \ \forall\ i = 1, \ldots, N_{obs}
}
\end{equation}
Equation~\eqref{eq:VV-1} is thus a special necessary condition for Problem~$(R)$. Any claim of an optimal solution to Problem~$(R)$ must be supported by \eqref{eq:VV-1} for mathematical legitimacy. \emph{Hence, before writing the first line of code, we expect DIDO to generate constant co-position trajectories $t \mapsto (\lambda_x, \lambda_y)$ whenever the robot is sufficiently far from all obstacles}.  Furthermore, over regions where $\mu_i \ne 0$, we expect
$t \mapsto (\lambda_x, \lambda_y)$ to be different. More importantly, because $t \mapsto \bmu$ is allowed to be a Dirac delta function (see \cite{ross-book} for details) the co-position trajectories may jump instantaneously. In addition, because $\mu_i(t) \le 0$, the jumps (if any) in the co-position trajectories are always positive (upwards) if $x(t) > x_i$ for $\lambda_x(t)$ and $y(t) > y_i$ for $\lambda_y(t)$. \emph{Hence, before any code is written, we expect DIDO to generate constant co-position trajectories with possible up-jumps (or down-jumps)}. Note that these are \emph{\textbf{all necessary conditions}} for optimality.  Stated differently, any candidate solution that cannot satisfy these conditions is certainly not optimal.  Consequently, when optimality claims are made on computed solutions without a demonstration of such necessary conditions, it is questionable, at the very least.

Next, from the Hamiltonian minimization condition, we get,
\begin{subequations}\label{eq:SA-maybe}
\begin{align}
\frac{\lambda_x \cos\theta + \lambda_y \sin\theta}{2}
+ c \lambda_\theta + \mu_r = 0 \\
\frac{\lambda_x \cos\theta + \lambda_y \sin\theta}{2}
- c \lambda_\theta + \mu_l = 0
\end{align}
\end{subequations}
If $\mu_r(t) = 0$ or $\mu_l(t) = 0$ over a nonzero time interval, then \eqref{eq:SA-maybe} must be analyzed for the existence of singular arcs.  If singular arcs are absent, then, from \eqref{eq:mu=bb-maybe} we get the condition that $t \mapsto (\omega_r, \omega_l)$ must be ``bang-bang'' and in accordance with the ``law:''
\begin{equation}\label{eq:VV-2}
\boxed{
\omega_{r,l}(t) =  \left\{\begin{array}{ccc}
   \omega^L & \ \text{if} \ \  & \mu_{r,l}(t) \le 0 \\
  \omega^U  & \ \text{if} \ \  & \mu_{r,l}(t) \ge 0
\end{array}  \right.
}
\end{equation}

Finally, combining the Hamiltonian value condition with the Hamiltonian evolution equation, we get the condition,
\begin{equation}\label{eq:VV-3}
\boxed{
\mathcal{H}[@t] = - 1
}
\end{equation}

\subsection{Step 2: Scaling/Balancing and Setting up the ``Problem of Problems''}
The second critical preparatory step in setting up a problem for DIDO is scaling and balancing the equations\cite{scaling}.  This aspect of the the problem set up is discussed extensively in \cite{scaling} and the reader is strongly encouraged to read and understand the principles and practices presented in this article. At this point, we simply note that it is often advantageous to ``double-scale'' a problem.  That is, the first scaling is done ``by-hand'' to reduce the number of design parameters (constants) associated with a problem\cite{ross-book}.  Frequently, canonical scaling achieves this task\cite{scaling}.  The second scaling is done numerically to balance the equations\cite{scaling}.

Once a problem is scaled to its lowest number of constants, it is worthwhile to set up a ``problem of problems''\cite{ross-book} even if the intent is to solve a single problem.  To better explain this statement, observe that although Problem~$(R)$ is a specific optimal control problem in the sense of its specificity with respect to Problem~$(G)$ defined in Sec.~\ref{sec:intro}, one can still create a menu of Problem~$(R's)$ by changing the number of obstacles $N_{obs}$, the location of the obstacles $(x_i, y_i)$, the size of the obstacles $R_i$, the size of the robot $R_0$, the exact specification of the boundary conditions $(\bx^0, \bx^f)$ etc.  This is the problem of problems.  The pre-code analysis and the necessary conditions derived in Sec.~\ref{sec:robotics}.\ref{sec:pre-code-analysis} are agnostic to these changes in the data. Furthermore, once a DIDO-application-specific code is set up, it is easy to change such data and generate a large volume of plots and results.  This volume of data can be reduced if the problem is first scaled by hand to reduce the number of constants down to its lowest level\cite{ross-book}. In any case, the new challenge generated by DIDO-runs is a process to keep track of all these variations.  This problem is outside the scope of this paper.   The problem that is in scope is a challenge problem, posed by D. Robinson\cite{robinson}.  Robinson's challenge comprises a clever choice of boundary conditions and the number, location and size of the obstacles.  The challenge problem is illustrated in Fig.~\ref{fig:Rsetup}.
%
   \begin{figure}[h!]
      \centering
      {\includegraphics[angle=0, width = 0.98\columnwidth]{Rsetup}}
      \caption{\textsf{``Trick'' boundary conditions and obstacles set up for Problem~$(R)$.}}
    \label{fig:Rsetup}
   \end{figure}
%

As shown in Fig.~\ref{fig:Rsetup}, two obstacles are placed in a figure-8 configuration and the boundary conditions for the robot are on opposite sides of the composite obstacle.  The idea is to ``trick'' a user in setting up a guess in the straight line path shown in blue.  In using this guess, an optimal control solver would get trapped by straight line solutions and would claim to generate an optimal solution by ``jumping'' over the single intersection point of the figure-8 configuration.  Attempts at ``mesh refinement'' would simply redistribute the mesh so that the discrete solution would continue to jump across the \emph{single point obstacle} in the middle. According to Robinson, all optimal control codes failed to solve this problem with such an initialization.

\subsection{Step 3: Generating DIDO's Guess-Free Solution}
\emph{Because DIDO is guess-free, it  requires no initialization.} Thus, one only defines the problem as stated and runs DIDO. It is clear from Fig.~\ref{fig:RsolD} that DIDO was able to provide the robot a viable path to navigate around the obstacles.
This is DIDO's Superior To Intuition$^{\text{TM}}$ capability.
%
   \begin{figure}[h!]
      \centering
      {\includegraphics[angle=0, width = 0.9\columnwidth]{RsolD}}
      \caption{\textsf{DIDO's solution to Problem~$(R)$ obtained via its default guess-free mode that illustrates its Superior To Intuition$^{\text{TM}}$ capability.}}
    \label{fig:RsolD}
   \end{figure}
%
At this point, one can simply declare success relative to the posed challenge and ``collect the reward.'' Nonetheless, in noting that DIDO is purportedly capable of solving optimal control problems, it is convenient to use Problem~$(R)$ to illustrate how the ``path'' shown in Fig.~\ref{fig:RsolD} may have actually solved the stated problem.  In effect, there are (at least) two steps in this \emph{\textbf{verification and validation}} process (known widely as \emph{\textbf{V\&V}}):
\begin{enumerate}
\item Using a process that is independent of the optimization, show that the computed solution is feasible; and
\item Show that the necessary conditions (as many as possible\footnote{See, for example, \eqref{eq:VV-1}, \eqref{eq:VV-2} and \eqref{eq:VV-3}.}) are satisfied.
\end{enumerate}
In engineering applications, Step~1 is quite critical for safety, reliability and success of operations. Furthermore, it is also a critical feature of any numerical analysis procedure to ``independently'' check the validity of a solution.

\subsection{Step 4: Producing an Independent Feasibility Test}
To demonstrate the independent feasibility of a candidate solution to an optimal control problem, we simply take the computed control trajectory $t \mapsto \bu^\sharp$ and propagate the initial conditions through the ODE,
\begin{equation}\label{eq:ivp}
\dot\bx = \bff(\bx, \bu^\sharp(t)), \quad \bx(t_0) = \bx^0
\end{equation}
Let $t \mapsto \bx^\sharp$ be the propagated trajectory; i.e., one obtained by solving the initial value problem given by \eqref{eq:ivp}.  Then, $\bx^\sharp(\cdot)$  is the truth solution that is used to test if other constraints (e.g. terminal conditions, path constraints etc.) are indeed satisfied.

In performing this test for Problem~$(R)$, we first need the DIDO-computed control trajectory.  This is shown in Fig.~\ref{fig:Rcontrols}.
%
   \begin{figure}[h!]
      \centering
      {\includegraphics[angle=0, width = 0.8\columnwidth]{Rcontrols}}
      \caption{\textsf{Candidate DIDO control trajectory to Problem~$(R)$.}}
    \label{fig:Rcontrols}
   \end{figure}
%
It is clear that both controls take on ``bang-bang'' values with no apparent singular arcs.
%
\begin{remark}
It is abundantly clear from Fig.~\ref{fig:Rcontrols} that DIDO is easily able to represent discontinuous controls. Consequently, the frequent lamentation in the literature that PS methods cannot represent discontinuities in controls is quite false.
\end{remark}
Using $t \mapsto (\omega_r, \omega_l)$ shown in Fig.~\ref{fig:Rcontrols} to propagate the initial conditions through the ``dynamics'' of the differential drive robot generates the solution shown in Fig.~\ref{fig:RstatesProp}.  The independent propagator used to generate Fig.~\ref{fig:RstatesProp} was \textsf{ode45}\footnote{Different propagators can generate different results; this aspect of solving \eqref{eq:ivp} is not taken into account in the ensuing analysis. } in MATLAB$^\circledR$.
%
   \begin{figure}[h!]
      \centering
      {\includegraphics[angle=0, width = 0.95\columnwidth]{RstatesProp}}
      \caption{\textsf{Propagated state trajectory using controls from Fig.~\ref{fig:Rcontrols}.}}
    \label{fig:RstatesProp}
   \end{figure}
%
Also shown in Fig.~\ref{fig:RstatesProp} are the propagated values of the terminal states. The errors in the terminal values of the propagated states are the ``true'' errors.  They are significantly higher than those obtained by the optimization process as indicated in Table~\ref{tab:my-table}.
\begin{table}[h!]
\centering
\begin{tabular}{@{}llll@{}}
\toprule
 & \multicolumn{1}{c}{x} & \multicolumn{1}{c}{y} & \multicolumn{1}{c}{$\theta$} \\ \midrule
Target & 10.0000 & 0.0000 & 0.0000 \\[0.5em]
\begin{tabular}[c]{@{}l@{}}DIDO \\ (optimization)\end{tabular} & 10.0000 & 0.0000 & 0.0000 \\[1em]
\begin{tabular}[c]{@{}l@{}}Propagated \\ (true)\end{tabular} & 09.993 & 0.02876 & -0.07314 \\ \bottomrule
\end{tabular}
\caption{True accuracy vs optimization errors }
\label{tab:my-table}
\end{table}
The error values generated by any optimization process (including DIDO) must be viewed with a significant amount of skepticism because they do not reflect reality. Despite this apparently obvious statement, outrageously small errors that are not grounded in basic science or computational mathematics are frequently reported.  For instance, in \cite{betts-book}, a minimum time of $324.9750302$ seconds is reported (pg.~146). The purported accuracy of this number is ten significant digits.  Furthermore, a clock to measure this number must have a range of a few hundred seconds and an accuracy of greater than one microsecond!  To better understand the meaning of ``small'' errors (reasonable vs ridiculous), recall that a scalar control function $u(\cdot)$ is computationally represented as a vector, $U^N := (u_0, u_1, \ldots, u_N) $.  Let $U^N_\star$ be the optimal value of $U^N$ for a fixed $N$.  Then, a Taylor-series expansion of the cost function $U^N \mapsto J^N$ around $U^N_\star$ may be written as,
\begin{align}
J^N(U^N) &= J^N(U^N_\star) + \partial J^N(U^N_\star)\cdot(U^N - U^N_\star) \nonumber \\
         &\qquad +  (1/2) (U^N - U^N_\star) \cdot \partial^2 J^N(U^N_\star)\cdot(U^N - U^N_\star) \nonumber \\
         & \qquad \qquad + \text{higher order terms} \label{eq:derive-sqrt-error-1}
\end{align}
Setting $\partial J^N (U^N_\star) = 0$ in \eqref{eq:derive-sqrt-error-1}, we get,
\begin{align}
2 \left|J^N(U^N) - J^N(U^N_\star)\right| =
         & \left|\widehat{U}\cdot \partial^2 J^N(U^N_\star) \cdot \widehat{U} \right|\left\|U^N - U^N_\star \right\|^2 \nonumber \\
         & \qquad \qquad + \text{higher order terms} \label{eq:derive-sqrt-error-2}
\end{align}
where $\widehat{U}$ is a unit vector.
In a \emph{perfectly conditioned system}, the Hessian term in \eqref{eq:derive-sqrt-error-2} is unity; hence, in this best-case scenario we have\footnote{Equation~\eqref{eq:sqrt-error} is well-known in optimization; see, for example \cite{GMSW}.},
\begin{equation}\label{eq:sqrt-error}
\norm{U^N - U^N_\star} \sim \mathcal{O}\left(\sqrt{\big|J^N(U^N) - J^N(U^N_\star)\big|}   \right)
\end{equation}
Let $\epsilon_M$ be the machine precision.  Then from \eqref{eq:sqrt-error}, we can write,
\begin{equation}\label{eq:error-formula-UN}
\norm{U^N - U^N_\star}  > \mathcal{O}\left(\sqrt{\epsilon_M}\right) \sim 10^{-8}
\end{equation}
where, the numerical value in \eqref{eq:error-formula-UN} is obtained by setting $\epsilon_M \sim 10^{-16}$ (double-precision, floating-point arithmetic).

The vector $U^N_\star$ is only an ``$N^{th}$ degree'' approximation to the optimal function $u(\cdot)$.
The ``best'' convergence rate in approximation of functions is exponential\cite{atap}.  This record is held by PS approximations\cite{atap, Kang_2008_convergence,kang-rate} because they have near-exponential or ``spectral'' rates of convergence (assuming no segmentation, no knotted concatenation of low-order polynomials for mesh-refinement\cite{knots,auto-knots}, perfect implementation etc.).  In contrast, Runge-Kutta rates of approximation are typically\cite{hnw-ode} only $\mathcal{O}(4)$ or $\mathcal{O}(5)$ (fourth-order or fifth-order).  Assuming convergence to within a digit (very optimistically!) we get,
\begin{equation}\label{eq:-74u}
\left\| U^N(t) - u(t)\right\|  > 10\times\mathcal{O}\left(\sqrt{\epsilon_M}\right) \sim 10^{-7}
\end{equation}
where, we have abused notation for clarity in using $U^N(t)$ to imply an interpolated function.
In practice, the error may be worse than $10^{-7}$ (seven significant digits) if the derivatives of the data functions in a given problem (gradient/Jacobian) are computed by finite differencing.  Of course, none of these errors take modeling errors into account!  \emph{An independent feasibility test is therefore a quick and simple approach for \underline{estimating} the true feasibility errors of an optimized solution.  Any errors reported without such a test must be viewed with great suspicion!}\footnote{It is possible for the errors given by \eqref{eq:sqrt-error} (and hence \eqref{eq:-74u}) to be smaller in certain special cases. }

Given the arguments of the preceding paragraphs, it is apparent that a ``very small'' error that qualifies for the term ``very accurate'' control may be written as,
\begin{equation}\label{eq:-64u}
\|U^N(t) - u(t)\| \sim 10^{-6}
\end{equation}
Consequently, achieving an accuracy for $u(t)$ beyond six significant digits in a generic problem is highly unlikely.  Furthermore, recall that the estimate given by \eqref{eq:-64u} is based on perfect/optimistic intermediate steps!  Therefore, practical errors may be worse than $10^{-6}$. Regardless, to achieve high accuracy, the problem must be well conditioned.  To achieve better conditioning, the problem must be properly scaled and balanced regardless of which optimization software is used.  See \cite{scaling} for details.  Providing DIDO a well-scaled and balanced problem also triggers the best of its true-fast suite of algorithms.
%
\begin{remark}
The optimistic error estimate given by \eqref{eq:-74u} or \eqref{eq:-64u} does not mean greater accuracy is not possible.  The main point of \eqref{eq:-64u} is to treat ``accuracy'' claims of computed solutions with great caution.   It is also important to note that the practical limit given by \eqref{eq:-64u} does not preclude the possibility of achieving highly precise solutions.  See, for example, \cite{Kepler-micro-slew} where accuracies of the order of $10^{-7}$~radians were achieved in pointing the Kepler space telescope.
\end{remark}

\subsection{Step 5: Testing for Optimality}
This step closes the loop with Step 1.  In the pre-code analysis step, we noted (see \eqref{eq:VV-1}) that the co-position trajectories must be constants with potential jumps.  Recall that (see \eqref{eq:callingDIDO}) DIDO automatically generates all the duals as part of its outputs. Thus, the dual variables that are of interest in testing \eqref{eq:VV-1} is contained in \textsf{dual}.\textbf{dynamics}. A plot of the costate trajectories generated by DIDO is shown in Fig.~\ref{fig:Rlambdas}.
%
   \begin{figure}[h!]
      \centering
      {\includegraphics[angle=0, width = 0.8\columnwidth]{Rlambdas}}
      \caption{\textsf{DIDO-generated costate trajectories for Problem~$(R)$.}}
    \label{fig:Rlambdas}
   \end{figure}
%
It is apparent from this figure that the co-position trajectories $t \mapsto (\lambda_x, \lambda_y) $ are indeed constants with jumps over two spots. If $t_c$ is the time when a jump occurs, then it follows that $\lambda_y(t)$ must jump upwards because $y(t_c) > y_i$ where $t_c$ is the point in time when the robot ``bubble'' just touches the obstacle bubble. In the case of the $x$-coordinates, we have $x(t_c) < x_i$ at the first touch of the bubbles; see Fig.~\ref{fig:RsolD}. Hence, $\lambda_x(t)$ must jump downward, which it indeed does in Fig.~\ref{fig:Rlambdas}.  Subsequently, at the second touch of the bubbles $x(t_c) > x_i$; hence,  $\lambda_x(t)$ jumps upwards.

Because DIDO also outputs $\bmu(\cdot)$ in \textsf{dual}.\textbf{path}, it is a simple matter to test \eqref{eq:VV-2}.  The path covector trajectory $\mu_l(\cdot)$ is plotted in Fig.~\ref{fig:RswitchProof} over the graph of $\omega_l(\cdot)$.  It is clear from this plot that $\mu_l(\cdot)$ is indeed a switching function for $\omega_l(\cdot)$.
%
   \begin{figure}[h!]
      \centering
      {\includegraphics[angle=0, width = 0.8\columnwidth]{RswitchProof}}
      \caption{\textsf{Verification of \eqref{eq:VV-2} for the $(\omega_l(\cdot), \mu_l(\cdot))$ pair generated by DIDO.}}
    \label{fig:RswitchProof}
   \end{figure}
%
The same holds for $\omega_r$ and $\mu_r$. This plot is omitted for brevity.

Because there are two obstacles in the challenge problem (modeled as two inequalities), \textsf{dual}.\textbf{path} also contains $\mu_1(\cdot)$ and $\mu_2(\cdot)$ that satisfy the complementarity conditions given by \eqref{eq:mu_i=comp}.  It is apparent from the candidate solution shown in Fig.~\ref{fig:RsolD} that if this trajectory is optimal, we require $\mu_2(t) = 0$ over the entire time interval $[t_0, t_f]$.  That this is indeed true is shown in Fig.~\ref{fig:Rmu+lam}.  Also shown in Fig.~\ref{fig:Rmu+lam} is the path covector function $\mu_1(\cdot)$ generated by DIDO.
%
   \begin{figure}[h!]
      \centering
      {\includegraphics[angle=0, width = 0.8\columnwidth]{Rmu+lam}}
      \caption{\textsf{DIDO predicts $\mu_1(t)$ to be the sum of two impulse functions centered at the jump points of the co-position trajectories $t \mapsto (\lambda_x, \lambda_y) $.}}
    \label{fig:Rmu+lam}
   \end{figure}
%
Note first that $\mu_1(t) \le 0\ \forall\ t$ as required by \eqref{eq:mu_i=comp}.  This function is zero everywhere except for two ``sharp peaks''
that are located exactly where the co-position trajectories jump. These sharp peaks are approximations to atomic measures\cite{ross-book}.  As noted in \cite{ross-book}, page 141, we can write $\bmu(\cdot)$ as the sum of two functions: one in $L^\infty$ and another as a finite sum of impulse functions.  From this concept, it follows that
\begin{equation}\label{eq:mu2=delta}
\mu_1(t) = \sum_j \eta_j \delta_D(t-t_j)
\end{equation}
where $\delta_D$ is the Dirac delta function centered at $t_j$ and $\eta_j$ is finite, the value of the impulse\cite{ross-book}.  From \eqref{eq:adj4R}, \eqref{eq:mu2=delta} and the signs of $(x(t_j) - x_1)$ and $(y(t_j) - y_1)$ (see Fig.~\ref{fig:RsolD}) it follows that $\lambda_x(t)$ must jump downwards first and upwards afterwards while $\lambda_y(t)$ must jump upwards at both touch points.  This is exactly the solution generated by DIDO in Fig.~\ref{fig:Rmu+lam}. \textbf{\emph{A useful teaching aspect of Fig.~\ref{fig:Rmu+lam} is its ability to explain difficult theoretical aspects of measure theory\cite{clarke-2013book} that are of practical value in understanding the fundamentals of optimal control theory}}.

Yet another verification of the optimality of the computed trajectory is shown in Fig.~\ref{fig:H}.
%
   \begin{figure}[h!]
      \centering
      {\includegraphics[angle=0, width = 0.8\columnwidth]{H}}
      \caption{\textsf{DIDO-computed evolution of the lower Hamiltonian $t \mapsto \mathcal{H}[@t]$ is on average a constant and equal to $-1$.}}
    \label{fig:H}
   \end{figure}
%
According to \eqref{eq:VV-3}, the lower Hamiltonian must be a constant and equal to $-1$. DIDO's computation of the Hamiltonian evolution equation is not perfect as apparent from Fig.~\ref{fig:H}.  There are several reasons for this\cite{lncis}. With regards to Fig.~\ref{fig:H} specifically, one of them is, at least, in part because the computations of the Dirac delta functions in DIDO not perfect as evident from Fig.~\ref{fig:Rmu+lam}.

\section{Miscellaneous}\label{sec:misc}
\textbf{\emph{DIDO's algorithm as implemented in DIDO is not perfect!}} 
In recognition of this distinction, it should not be surprise to any user that DIDO has bugs.
It should also not be a surprise to any reader that DIDO and/or PS methods get frequently compared to other software, methods etc.  In this section we address such miscellaneous items.

\subsection{Known Bugs}
In general, DIDO will run smoothly if the coded problem is free of singularities, well posed, scaled and balanced. Both beginners and advanced users can avoid costly mistakes in following the best practices as outlined in Sec.~\ref{sec:best-practices}, together with using the DIDO Doctor Toolkit,$^{\text{TM}}$ and a preamble in coding the DIDO files.   Even after exercising such care, a user may inadvertently make a mistake that is not detected by the DIDO Doctor Toolkit.  In this case, DIDO may exit or quit with cryptic error messages. Examples of these are:
\subsubsection{Grid Interpolant Error}
This problem occurs when a user allows the possibility of $t_f - t_0 = 0$ in the form of the bounds on the initial and final time.  The simple remedy is to ensure that the bounds are set up so that $t_f - t_0 > 0$.

\subsubsection{Growthcheck Error}
This is frequently triggered because of an incorrect problem formulation that the DIDO Doctor Toolkit is unable to detect.  Best remedy is to check the coded problem formulation (and ensure that it is the same as one conceived). Example: if ``$x$'' is coded as ``$y$.''

\subsubsection{SOL Error}
This is not a bug; rather, it is simply as an error message where DIDO has gone wrong so badly that it is unable to provide any useful feedback to a user.  The best remedy for a user is to start over or to go back to a prior version of his/her files to start over.

\subsubsection{Hamiltonian Error}
This is neither a bug nor an error but is listed here for information. Consider the case where the lower Hamiltonian is expected to be a constant.  The computed Hamiltonian may undergo small jumps and oscillations around the expected constant.  These jumps and oscillations are amplified for minimum time problems and/or when one or more of the path covector functions contain Dirac-delta functions as evident in Fig.~\ref{fig:Rmu+lam}. When these are absent, the lower Hamiltonian computed by DIDO is reasonably accurate; see \cite{ross-book} for additional details on impulsive path covector functions and for the meaning of the various Hamiltonians.

\subsection{Comparison With Other Methods and Software}
It is forgone conclusion that every author of a code/method/software will claim superiority.  In maintaining greater objectivity, we simply identify \textbf{\emph{scientific facts}} regarding DIDO, PS theory and its implementations.

\subsubsection{PS vs RK; PS vs PS}
DIDO is based on PS discretizations.  The convergence of a PS discretization is spectral; i.e., almost exponentially fast\cite{atap, Kang_2008_convergence,kang-rate}. Convergence of Runge-Kutta (RK) discretizations are typically $\mathcal{O}(4)$.  \emph{\textbf{Consequently, it is mathematically impossible for a fourth-order method to converge faster than a higher-order method, particularly an ``exponentially fast'' method}}. Claims to the contrary are based on a combination of one or more of the following:
\begin{enumerate}[i)]
\item Not all PS discretizations are convergent just as not all RK methods are convergent\cite{hnw-ode}.  Unfortunately, even a convergent PS method may be inappropriate for a given problem\cite{PSReview-ARC-2012} because of mathematical technicalities associated with inner products and boundary conditions\cite{advances}. By using an inappropriate PS discretization over a set of numerical examples, it possible to show that it performs poorly or similarly to a convergent RK method\cite{betts-comments}.
\item The ``exponential convergence rate'' of a PS discretization is based on the order of the interpolating polynomial: higher the order, the better the approximation\cite{atap}. Consequently, an RK method can be easily made to outperform a sufficiently low-order PS discretization that is based on highly-segmented mesh refinements; see \cite{birk-koeppen} for details.
\item In a correct PS discretization, the control is \emph{\textbf{not}} necessarily represented by a polynomial\cite{PSReview-ARC-2012}; this fact has been known since at least 2006\cite{TAC:linearizable}.  It is possible to generate a bad PS discretization by setting the control trajectory to be a polynomial. In this case, even an Euler method can be made to outperform a PS discretization.
\item Low order PS discretizations are claimed to outperform high-order PS methods based on the assumption that a high-order method exhibits a Gibbs phenomenon.  It is apparent from Fig.~\ref{fig:Rcontrols} that there is no Gibbs phenomenon in a properly implemented PS discretization.

\end{enumerate}

\subsubsection{Implementation vs Theory; Theory vs Implementation}
PS discretizations are deceptively simple. Its proper implementation requires a significant amount of computational finesse\cite{wang,birk-TN,atap,bogaert,GLR,spec-diff-twist}.  For instance, in Lagrange PS methods (see Fig.~\ref{fig:PStypes} in Appendix~A), it is very easy to compute Gaussian grid points\cite{bogaert,GLR} poorly, or generate differentiation matrices with large round-off errors\cite{spec-diff-twist}, or or implement closure conditions\cite{advances,acc:hybrid} incorrectly.  In doing so, it is easy to show poor numerical performance of theoretically convergent PS discretizations.  Furthermore it is easy to show large run times of fast PS discretizations by simply patching it with generic nonlinear programming solvers.  It is also easy to claim poor computational performance of PS theory by using DIDO clones or other poor implementations of PS techniques\cite{betts-comments}.  It is also easy to show poor performance of DIDO by improper scaling or balancing or other inappropriate use of the toolbox.

To maintain scientific integrity, it is important to compare and contrast performance of trajectory optimization methods in a manner that is agnostic to its implementation and/or computational hardware. For instance, in static optimization, it is a scientific fact that a Newton method converges faster than a gradient method irrespective of its implementation on a computer\cite{GMSW}.  This is because computational speed is defined precisely in terms of the order of convergence and not in terms of computational time.  Furthermore, each Newton iteration is expensive relative to a gradient method because it involves the computation of a Hessian and solving a system of linear equations\cite{GMSW}. This fact remains unchanged no matter the implementation or the computer hardware.  Similar to its static optimization counterpart, it is indeed possible to perform computer-agnostic analysis of dynamic optimization methods; see \cite{ross:direct-shooting} for an example of a mathematical procedure for such an analysis.  \emph{\textbf{This topic in trajectory optimization is mostly uncharted and remains a wide open area of research}}. Nonetheless, there is no shortage of papers that use ``alternative facts'' and/or compare selective performance of codes as part of a self-fulfilling prophecy.

\subsubsection{DIDO $\gtreqqless$ PS}
It is quite possible for an algorithm to perform better than theory (e.g. simplex vs ellipsoid algorithm) or for a code to perform worse than theory.  The latter case is also true if a theory is implemented incorrectly, poorly or imperfectly. Consequently, using a code to draw conclusions about a theory is not only unscientific, it may also be quite misleading.  DIDO is not immune to this fact. Furthermore, because PS discretization is only an element of DIDO's algorithm, it is misleading to draw conclusions about PS theory using DIDO. This fact is strengthened by the point noted earlier that even DIDO's algorithm as implemented in DIDO is not perfect!  In fact, even the implementation is not complete! Despite all these caveats, it is still very easy to refute some of the claims made in the literature by simply running DIDO for the purportedly hard problems.  For example, the claims made in \cite{betts-comments} were easily refuted in \cite{marshDAE} by simply using DIDO.

\subsubsection{DIDO vs DIDO's algorithm}
In principle, DIDO's algorithm can generate ``near exact'' solutions;\footnote{The statement follows from the fact that DIDO's algorithm is grounded on the convergence property of the Stone-Weierstrass theorem and approximations in Sobolev spaces; see \cite{PSReview-ARC-2012}.} however, its precision is machine limited by the fundamental ``square-root equation'' given by \eqref{eq:sqrt-error}.  In practice, the precision is further limited than the square-root formula because of other machine and approximation errors (see \eqref{eq:-64u}).  In addition, DIDO's algorithm, as implemented in DIDO, the toolbox, is practically ``tuned'' to generate the best performance for ``industrial strength'' problems.  This is because the toolbox,  is designed to work across the ``space of all optimal control problems.'' Consequently, it is not too hard to design an academic problem that shows a ``poor performance'' of DIDO, the toolbox, even though DIDO's algorithm may be capable of significantly superior performance.

\subsubsection{First Facts: DIDO vs Other Software}
As the old adage (popularized by John F. Kennedy) goes, \emph{success has many fathers but failure is an orphan}.  Invariably, every author will claim something ``first'' or something ``better'' on a successful idea.  Fortunately, something first is easily verifiable.  The following milestones document some of DIDO's firsts:
\begin{enumerate}[i)]
\item The first PS optimal control solver was DIDO (this includes the pre-2001 version\cite{fr1}).  All optimal control solvers prior to DIDO were based on RK-type methods\cite{betts-survey}.  This fact is evident in the highly-regarded 1998 survey paper\cite{betts-survey} where there is no mention of PS methods;\footnote{Because survey papers typically look backwards, they have the unfortunate side-effect of being obsolete the moment they get published.} see also the 2001 book\cite{betts-book} for an absence of any discussion on PS optimal control theory.

\item The 2001 version of DIDO was the first object-oriented, general-purpose optimal control software\cite{ross-book,PSReview-ARC-2012,prague}. Prior to DIDO 2001, users had to provide files and variables in a convoluted fashion to nonlinear programming solvers; see, for example, Appendix~A.3 of \cite{betts-book}.  After it attained widespread publicity in 2006-07 (see Fig.~\ref{fig:ZPMcollage}), DIDO's format and structures (see Sec.~\ref{sec:DIDO-i/o}) were rapidly cloned\cite{ross-book}. Subsequently, the object-oriented methodology of DIDO was adopted as a computer-programming standard with varying modifications.

\item The first flight-proven general-purpose optimal control solver was the 2006 version of DIDO\cite{zpm:NASA-report,zpm:first,SIAMpg1}.  While other trajectory codes have been in the work-flow at NASA\cite{POST2ref}, they have all been special purpose tools. DIDO is the first general-purpose optimal control solver to be inserted in flight operations. Incidentally, it has also served as discovery tool for other NASA firsts; see the Feature Article in \textit{IEEE Spectrum}\cite{TRACE:IEEE}.

\item The 2008 version of DIDO was the first guess-free, general-purpose optimal control solver\cite{guess-free}.  To the best of the author's knowledge, DIDO remains the only such software well after a decade!

\item The first general-purpose optimal control solver to be embedded on a microprocessor was the 2011 version of DIDO's algorithm.  See \cite{PSReview-ARC-2012} for details on the computer architecture, embedding methodology, proof-of-concept and run-time performance improvements achieved by embedding.

\end{enumerate}

\subsection{Computer Memory Requirements}\label{sec:memory-reqs}
From \eqref{eq:PlamDN} in Appendix A, it follows that the memory requirements for the variables is $\mathcal{O}(N)$, where $(N+1)$ is the number of points that constitute the grid (in time).  This estimate can be refined further to $\mathcal{O}(nN)$ where $n$ is the number of continuous-time variables.  For instance, for Problem $(\bG)$, $n \sim (4N_x + N_u + N_h)$. Assuming $8$ bytes per variable, the memory requirements for variables can be written as
$\mathcal{O}(nN \times 10)$ bytes with $8$ replaced by $10$. Thus, for example, for $N=1,000$, the memory estimate can be written as $10n$ KB or $10$~KB per continuous variable.  Clearly, the number of variables is not a serious limiting factor for DIDO's algorithm.  This is part of the reason why it was argued in \cite{millionPT} that the production of solutions over a million grid points was more connected to numerical science than computer technology.

Carrying out an analysis similar to that of the previous paragraph, it is straightforward to show that the number of constraints is also not a significant limiting factor to execute DIDO's algorithm.  By the same token, the gradients of the nonlinear functions (see \eqref{eq:PlamDN}) are also not memory-intensive because Hamiltonian programming naturally exploits the separability property inherent in $(P^{\lambda, D, N})$. This conclusion carries over to Problem $(\bG)$ as well (see also \cite{birk-TN}).  Thus, the dominant sources of memory consumption are the matrices $\bA$ and $\bA^\star$ introduced in \eqref{eq:AXVN=0} and \eqref{eq:Astar-def} respectively (see Appendix A).  In choosing discretizations that satisfy Theorem~\ref{theorem:tunnel}, the memory requirements for the linear system given in \eqref{eq:PlamDN} is $2 N^2$ variables, and hence $2N_x N^2$ for solving Problem~$(\bG)$.  In using this number and the analysis of the previous paragraph, it follows that the memory cost for a grid size of $250$ points is about $1$~MB per state variable (and independent of the number of other variables). For $1,000$ grid points, the memory requirement is about $10$ MB per state variable.

Because its memory requirements are relatively low and convergence is fast, it follows that DIDO's algorithms can be easily embedded on a low end microprocessor. Specific details on a generic computer architecture for embedded optimal control are described in \cite{PSReview-ARC-2012}. As noted earlier, it is important to note the critical difference between DIDO's algorithms and DIDO, the optimal control toolbox.  The latter is not designed for embedding while the former is embed-ready. A proof-of-concept in embedding was shown in \cite{PSReview-ARC-2012}.  Consequently, it is technically feasible to generate real-time optimal controls by coordinating the Nyquist frequency requirements of digitized signals to the Lipschitz frequency production of optimal controls\cite{ross-book}.  The mathematical foundations of this theory along with some experimental results are described in \cite{ross:optimal-feedback} and \cite{ross:feedback-stability}; see also \cite{ross-book}, Sec.~1.5 and research articles on the Bellman pseudospectral method\cite{bellman-conf,bellman-low-t}.

\section{Conclusions}

\emph{\textbf{DIDO's solution to the ``space station problem'' that created headlines in 2007 is now widely used by other codes to benchmark their performance}}. The 2001--2007 versions of DIDO have been extensively cited as a model for new software.  The early versions of the software were indeed based on the admittedly  naive approach of patching nonlinear programming solvers to pseudospectral discretizations.  Later versions of the optimal control toolbox execute a suite of fundamentally different and multifaceted algorithms that are focused on being guess-free, true-fast and verifiably accurate.
Despite major internal changes, all versions of DIDO have remained true to its founding principle: to facilitate a fast and quick process in taking a concept to code using nothing but Pontryagin's Principle as a central analysis tool.  In this context, DIDO as a toolbox is more about a minimalist's approach to solving optimal control problems rather than pseudospectral theory or its implementation.  This is the main reason why \textbf{\emph{the best way to use DIDO is via Pontryagin's Principle!}}\footnote{DIDO's philosophy of \emph{analysis before coding}, as exemplified in Sec.~\ref{sec:robotics}, is in sharp contrast to the traditional ``brute-force'' direct method. The widely-advertised advantage of traditional direct methods is that a user apparently need not know/understand any fundamentals of optimal control theory to create/run codes for solving optimal control problems.  %
}

Because DIDO does not ask a user to supply the necessary conditions of optimality, it is frequently confused for a ``direct'' method.  Ironically, it is also confused with an ``indirect'' method because DIDO also provides costates and other multipliers associated with Pontryagin's Principle.
From the details provided in this paper, it is clear that DIDO does not fit within the traditional stovepipes of direct and indirect methods. DIDO achieves its true-fast computational speed without requiring a user to provide a ``guess,'' or analytical gradient/Jacobian/Hessian information or demanding a high-performance computer; rather, the ``formula'' behind DIDO's robustness, speed and accuracy is based on a Hamiltonian-centric approach to dynamic optimization whose fundamentals are agnostic to pseudospectral discretizations.  This is likely the reason why DIDO is able to solve a wide variety of ``hard'' differential-algebraic problems that other codes/methods have admitted difficulties.

\section*{Endnotes}
DIDO$^\copyright$ is a copyrighted code registered with the United States Copyright Office. Under Title 17 U.S.C. \S 107, fair use of DIDO's format and DIDO's structures is permitted (with proper citation of this article) but limited to research publications, news articles, scholarship, nonprofit or academic use.






\section*{Appendix A: DIDO's Computational Theory}\label{sec:DIDO-theorems}
A vexing question that has plagued computational methods for optimal controls is the treatment of differential constraints.  DIDO addresses this question by introducing a \textit{\textbf{virtual control variable}} $\bv$ to rewrite the differential constraint as,
\begin{subequations}\label{eq:vdef+f-eqns}
\begin{align}
\frac{d\bx}{d\tau} &:= \bv \label{eq:v=bydef-tau}\\
\left(\frac{d\Gamma}{d\tau}\right) \bff(\bx, \bu, \Gamma(\tau; t_0, t_f))  & = \bv \label{eq:f=v-in-tau}
\end{align}
\end{subequations}
where, $\Gamma: \Real \to \Real$ is an invertible nonlinear transformation\cite{ross-book,PSReview-ARC-2012,kt-map},
\begin{equation}
t = \Gamma(\tau; t_0, t_f) \quad \Leftrightarrow \tau = \Gamma^{-1}(t; t_0, t_f)
\end{equation}
and $\tau$ is transformed time over a fixed horizon $[\tau^0, \tau^f]$.  Because the virtual control variable affects only the dynamical constraints, the rest of the problem formulation remains unchanged. Thus the age-old question of the ``best'' way to discretize a nonlinear differential equation is relegated to answering the same question for the embarrassingly simple linear equation given by \eqref{eq:v=bydef-tau}.
\begin{remark}
The transformation given by \eqref{eq:vdef+f-eqns} is only internal to DIDO.  As far as the user is concerned a virtual control variable does not exist.
\end{remark}
As simple as it is, a surprisingly significant amount of sophistication is necessary to implement \eqref{eq:v=bydef-tau} efficiently. To explain the details of this computational theory, we employ the special problem defined by,
\begin{eqnarray}
& (\textsf{$P$}) \left\{
\begin{array}{lrl}
\emph{Minimize } & J[x(\cdot), u(\cdot)] :=&\displaystyle \int_{t_0}^{t_f} F(x(t), u(t))\,dt\\[1em]
\emph{Subject to}& \dot x(t) =& f(x(t), u(t))
\end{array} \right. & \label{eq:probP}
\end{eqnarray}
where, $t_0, t_f, x(t_0)$ and $x(t_f)$ are all fixed at some appropriate values.  Furthermore, because the time interval $[t_f - t_0]$ is fixed, there is no need to perform a ``$\tau$''-transformation to generate the virtual control variable.  Consequently, using \eqref{eq:vdef+f-eqns}, we construct the \textbf{\emph{DIDO-form}} of $(P)$ as,
\begin{eqnarray}
& (\textsf{$P^D$}) \left\{
\begin{array}{lll}
\emph{Minimize } & J[x(\cdot), u(\cdot), v(\cdot)] :=\\
& \qquad \displaystyle \int_{t_0}^{t_f} F(x(t), u(t))\,dt\\[1em]
\emph{Subject to} & \dot x(t) =  v(t)\\
& 0 =  f(x(t), u(t)) - v(t)
\end{array} \right. & \label{eq:probPD}
\end{eqnarray}
\subsection{Introduction to the Cotangent Barrier}
Using the equations and the process described in Sec.~\ref{sec:DIDO-i/o} in applying Pontraygin's Principle to Problem~$(P)$, it is straightforward to show that the totality of conditions results in the following nonlinear differential-algebraic equations:
\begin{eqnarray}
& (\textsf{$P^\lambda$}) \left\{
\begin{array}{lrl}
& \dot x(t) =& f(x(t), u(t))  \\
&-\dot\lambda(t) = & \partial_x H(\lambda(t), x(t), u(t))\\
& 0 = &\partial_u H(\lambda(t), x(t), u(t))
\end{array} \right. & \label{eq:probPlam}
\end{eqnarray}
Using the concept of the virtual control variable for both the state and the costate equations, the \emph{DIDO-form} of $(P^\lambda)$ is given by,
\begin{eqnarray}
& (\textsf{$P^{\lambda, D}$}) \left\{
\begin{array}{lrl}
& \dot x(t) =& v(t)  \\
&-\dot\lambda(t) = & \omega(t)\\
& 0 =& f(x(t), u(t)) - v(t)  \\
&0 = & \partial_x H(\lambda(t), x(t), u(t)) - \omega(t)\\
& 0 = &\partial_u H(\lambda(t), x(t), u(t))
\end{array} \right. & \label{eq:probPlamD}
\end{eqnarray}
where, $\omega$ is called the co-virtual control variable. DIDO is based on the principle that \eqref{eq:probPlamD} is computationally easier than \eqref{eq:probPlam} to the extent that all the differential components are concentrated in the exceedingly simple linear equations.  To appreciate the simple sophistication of this principle, consider now an application of Pontryagin's Principle to Problem~$(P^D)$; this  generates,
\begin{eqnarray}
& (\textsf{$P^{D,\lambda}$}) \left\{
\begin{array}{lrl}
& \dot x(t) =& v(t)  \\
& 0 = & f(x(t), u(t))-v(t)  \\
&-\dot\lambda(t) = & \partial_x H(\mu(t), x(t), u(t))\\
& \lambda(t) = & \mu(t)\\
& 0 = &\partial_u H(\mu(t), x(t), u(t))
\end{array} \right. & \label{eq:probPDlam}
\end{eqnarray}
Comparing \eqref{eq:probPDlam} to \eqref{eq:probPlamD}, it is clear that,
\begin{equation}\label{eq:cotan-barrier}
(P^{D, \lambda}) \not\equiv (P^{\lambda, D})
\end{equation}
That is, \eqref{eq:cotan-barrier} is a statement of the noncommutativity  of the ``$\lambda$'' and ``$D$'' operations.  This noncommutativity can be written more elaborately as,
\begin{equation}\label{eq:cotan-barrier-operator-form}
\big(\lambda \circ D \big) (P) \not\equiv  \big( D\circ\lambda \big) (P)
\end{equation}
Figure~\ref{fig:cotangentB} encapsulates this notion in terms of an ``insurmountable'' \textbf{\emph{cotangent barrier}}.
%
   \begin{figure}[h!]
      \centering
      \framebox{\parbox{0.9\columnwidth}{
      \centering
      {\includegraphics[angle=-0, width = 0.63\columnwidth]{cotangentB2}}
      \caption{\textsf{An illustration of the cotangent barrier indicating the impossibility of moving ``up'' (i.e. in $\blam$) from $(P^{D, \lambda})$ to $(P^{\lambda, D})$.}}
    \label{fig:cotangentB}
    }}
   \end{figure}
%
It depicts the fact that an application of Pontryagin's Principle to the DIDO-form of the primal problem does not generate a DIDO-form of the primal-dual problem. The noncommutativity of the two operations seems to suggest that there was no value in using the DIDO form of the primal problem.  Despite this apparently discouraging result, we now show that it is possible to tunnel through the barrier, albeit discretely (pun intended!).

\subsection{Discrete Tunneling  Through the Cotangent Barrier}
Let $X^N, U^N$, and $V^N$ be $(N+1)$-dimensional vectors that represent discretized state, control and virtual control variables respectively over an \textbf{\emph{arbitrary grid}} $\pi^N$:
\begin{subequations}\label{eq:XUVNdef}
\begin{align}
X^N  &:= (x_0, x_1, \ldots, x_N)\\
U^N  &:= (u_0, u_1, \ldots, u_N)\\
 V^N &:= (v_0, v_1, \ldots, v_N)
\end{align}
\end{subequations}
From linearity arguments, it follows that a broad class of generic discretizations of $\dot x = v$ may be written as,
\begin{equation}\label{eq:AXVN=0}
\bA \left(
    \begin{array}{c}
      X^N \\
      V^N \\
    \end{array}
  \right) \equiv \bA_x X^N + \bA_v V^N = C^N
\end{equation}
where, $\bA  := [\bA_x \mid \bA_v] $ comprises two $(N+1) \times (N+1)$ discretization matrices that depend on $N$, $\pi^N$, and a choice of the discretization method. The choice of the discretization method also determines $C^N$, an $(N+1) \times 1$  matrix.  The dependence of $\bA_x$ and $\bA_v$ on $N$ and $\pi^N$ is suppressed for notational convenience.
Following the same rationale as in the production of \eqref{eq:AXVN=0},  a generic discretization of $\dot\lambda = -\omega$ may be written as,
\begin{equation}\label{eq:Astar-def}
\bA^* \left(
    \begin{array}{c}
      \Lambda^N \\
      \Omega^N \\
    \end{array}
  \right) \equiv \bA^*_\lambda \Lambda^N + \bA^*_\omega \Omega^N = C^{*N}
\end{equation}
where $\Lambda^N \in \real{N+1}$ and $\Omega^N \in \real{N+1}$ are defined analogous to \eqref{eq:XUVNdef}, $\bA^*$ is a matrix (that may not necessarily be equal to $\bA$) that depends on $N$ and $\pi^N$, and $C^{*N}$ is a matrix similar to $C^N$.
Collecting all relevant equations, it follows that a generic discretization of $(P^{\lambda, D})$ may be written as,
\begin{eqnarray}
& (\textsf{$P^{\lambda, D, N}$}) \left\{
\begin{array}{lrl}
& \bA \left(
    \begin{array}{c}
      X^N \\
      V^N \\
    \end{array}
  \right) =& C^N\\[1em]
&\bA^* \left(
    \begin{array}{c}
      \Lambda^N \\
      \Omega^N \\
    \end{array}
  \right) = & C^{*N} \\ [1em]
& \partial_{\lambda}H(\Lambda^N, X^N, U^N) - V^N = & 0 \\[0.5em]
&\partial_{x}H(\Lambda^N, X^N, U^N) - \Omega^N = &0 \\[0.5em]
&\partial_{u}H(\Lambda^N, X^N, U^N)  = &0
\end{array} \right. &\label{eq:PlamDN}
\end{eqnarray}
where, $H$ is reused as an overloaded operator to take in discretized vectors as inputs.

DIDO is based on the ansatz that \eqref{eq:PlamDN} is a well-structured, \emph{\textbf{grid-agnostic}} root-finding problem that can be solved efficiently for any given $N$ and $\pi^N$.  The basis for this ansatz is as follows:
\begin{enumerate}
\item The $\bA$-$\bA^*$ system is agnostic to the problem definition and data functions; hence, it can be handled separately and ``independently'' over the ``space of all optimal control problems.''
\item The $\bA$-$\bA^*$ system is always a linear (affine) equation even if Problem $(P)$ contains inequalities.
\item The nonlinear elements of \eqref{eq:PlamDN} are based on the Hamiltonian of Problem $(P)$.  The Hamiltonian is constructed by a simple dot-product operation (see Sec.~\ref{sec:DIDO-i/o}). In its discretized (overloaded) form, $H$ has the property that the Hamiltonian and its gradients at $i$ are not connected to their values at $(i+1)$.  This separable programming\cite{GMSW} property of the Hamiltonian and its gradients over $\pi^N$ can be harnessed for computational efficiency.
\item Solving the nonlinear elements of \eqref{eq:PlamDN} involves a sequential linearization as an inner-loop. Augmenting the $\bA$-$\bA^*$ linear system to this inner loop solves the entire problem. The iterations for solving the augmented linear system can be tailored to the special structure of the sequentially linearized Hamiltonian system and the specific method of discretization captured in the $\bA$-$\bA^*$ system.
\end{enumerate}
%
All these features of  $(P^{\lambda, D, N})$ are exploited in the production of DIDO's algorithm as detailed in Appendix~B.
%
\begin{definition}
\textbf{DIDO's generalized equation} is defined by $(P^{\lambda, D, N})$.
\end{definition}
%
Similar to $(P^{\lambda, D})$, Problem~$(P^D)$ may be discretized to generate the following problem,
\begin{eqnarray}
& (\textsf{$P^{D,N}$}) \left\{
\begin{array}{lll}
\emph{Minimize } & J^{D,N}[X^N, U^N, V^N] \\[0.3em]
                 & \qquad := \bq^T F(X^N, U^N)\\[0.6em]
\emph{Subject to}& \bA \left(
    \begin{array}{c}
      X^N \\
      V^N \\
    \end{array}
  \right) = C^N\\[1em]
& f(X^N, U^N) - V^N =  0
\end{array} \right. & \label{eq:probPDN}
\end{eqnarray}
where, $\bq = (q_0, q_1, \ldots, q_N)$ is an $(N+1) \times 1$ vector of positive quadrature weights associated with the specifics of the discretization given by $\bA$ (and inclusive of $\pi^N$).
%
\begin{remark}
From \eqref{eq:PlamDN} and \eqref{eq:probPDN}, it follows that DIDO's entire approach to discretization is isolated in the matrix pair $\set{\bA, \bA^*}$.  Part of DIDO's computational efficiency is achieved by exploiting this linear system.
\end{remark}
\begin{lemma}\label{lemma:pre-CMT4D}
Let $\bQ$ be the positive definite diagonal matrix defined by $\text{diag }(q_0, q_1, \ldots, q_N)$.  Define,
\begin{equation}\label{eq:Adag=bydef}
\bA^\dag_x := \bQ^{-1}\bA_x^T \bQ \quad\text{and}\quad  \bA^\dag_v := \bQ^{-1}\bA_v^T\bQ
\end{equation}
Then, under appropriate technical conditions at the boundary points, there exist multipliers $\Psi^N_A \in \real{N+1}$ and $\Psi^N_d \in \real{N+1}$ for Problem~$(P^{D, N})$ such that its dual feasibility conditions can be written as,
\begin{align}
\bA^\dag_v \overline{\Psi}^N_A - \overline{\Psi}^N_d  & = C^{*N} \label{eq:Hope4Astar}\\
\partial_x H\left(\overline{\Psi}^N_d, X^N, U^N\right)  + \bA^\dag_x \overline{\Psi}^N_A  &=0\\
\partial_u H\left(\overline{\Psi}^N_d, X^N, U^N\right)  &=0
\end{align}
where $\overline{\Psi}^N_A$ and $\overline{\Psi}^N_d$ are given by,
\begin{equation}
\overline{\Psi}^N_A := \bQ^{-1} \Psi^N_A \quad  \overline{\Psi}^N_d := \bQ^{-1} \Psi^N_d
\end{equation}
\end{lemma}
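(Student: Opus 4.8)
The plan is to read the three identities directly off the first-order (KKT) optimality conditions of the finite-dimensional program $(P^{D,N})$ and then conjugate everything by the positive-definite diagonal weight matrix $\bQ$. First I would attach a multiplier $\Psi^N_A\in\real{N+1}$ to the affine constraint $\bA_x X^N+\bA_v V^N=C^N$ and a multiplier $\Psi^N_d\in\real{N+1}$ to the pointwise nonlinear constraint $f(X^N,U^N)-V^N=0$, and form the Lagrangian $\mathcal{L}=\bq^T F(X^N,U^N)+(\Psi^N_A)^T(\bA_x X^N+\bA_v V^N-C^N)+(\Psi^N_d)^T(f(X^N,U^N)-V^N)$. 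Existence of these multipliers is exactly one of the ``appropriate technical conditions'': a constraint qualification must hold at the solution, which here amounts to full row rank of the combined constraint Jacobian, i.e. a nonsingularity hypothesis on the active part of the scheme matrix $\bA$. Stationarity then splits into three blocks: $\partial_{V^N}\mathcal{L}=\bA_v^T\Psi^N_A-\Psi^N_d=0$ and $\partial_{U^N}\mathcal{L}=\bQ\,\partial_u F(X^N,U^N)+\text{diag}\big(\partial_u f(x_i,u_i)\big)\Psi^N_d=0$ (both holding at every node, since controls and virtual controls are free everywhere), and $\partial_{X^N}\mathcal{L}=\bQ\,\partial_x F(X^N,U^N)+\bA_x^T\Psi^N_A+\text{diag}\big(\partial_x f(x_i,u_i)\big)\Psi^N_d=0$, which holds only at the free state nodes because $x(t_0),x(t_f)$ are prescribed.

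The crux is that $\bQ$ is diagonal, hence invertible and commuting with every diagonal matrix. Left-multiplying the $X^N$- and $U^N$-blocks by $\bQ^{-1}$ and substituting $\overline{\Psi}^N_A=\bQ^{-1}\Psi^N_A$, $\overline{\Psi}^N_d=\bQ^{-1}\Psi^N_d$, $\bA^\dag_x=\bQ^{-1}\bA_x^T\bQ$, $\bA^\dag_v=\bQ^{-1}\bA_v^T\bQ$ converts $\bQ^{-1}\text{diag}(\partial_x f(x_i,u_i))\Psi^N_d$ into $\text{diag}(\partial_x f(x_i,u_i))\overline{\Psi}^N_d$, so the $i$-th component reads $\partial_x F(x_i,u_i)+(\overline{\Psi}^N_d)_i\,\partial_x f(x_i,u_i)+[\bA^\dag_x\overline{\Psi}^N_A]_i=0$. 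Using $H=F+\lambda f$, so that $\partial_x H(\lambda,x,u)=\partial_x F+\lambda\,\partial_x f$ and $\partial_u H(\lambda,x,u)=\partial_u F+\lambda\,\partial_u f$, the two blocks collapse, in the overloaded notation of $(P^{\lambda,D,N})$, to $\partial_x H(\overline{\Psi}^N_d,X^N,U^N)+\bA^\dag_x\overline{\Psi}^N_A=0$ and $\partial_u H(\overline{\Psi}^N_d,X^N,U^N)=0$ — the second and third claimed identities. Conjugating the $V^N$-block by $\bQ^{-1}$ in the same way gives $\bA^\dag_v\overline{\Psi}^N_A-\overline{\Psi}^N_d=0$; writing the right side as $C^{*N}$, the matrix collecting the boundary (summation-by-parts / closure) contributions produced when the adjoint of $\bA_v$ is formed, yields the first identity and simultaneously exhibits the induced costate discretization as $\bA^*_\lambda=\bA^\dag_v$, $\bA^*_\omega=-\bI$, i.e. the $\bQ$-weighted transpose of $\bA_v$ — which is exactly the ``discrete tunnel'' through the cotangent barrier that the lemma is meant to furnish for the subsequent theorem.

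The routine content is the differentiation and the commute-with-$\bQ$ bookkeeping; the genuinely delicate step, and the one ``under appropriate technical conditions at the boundary points'' is reserved for, is the endpoint treatment. Because $x(t_0)$ and $x(t_f)$ are prescribed, the first and last components of $X^N$ are data rather than variables, so the $X^N$-stationarity equation is absent at those nodes; the missing rows are precisely where the discrete transversality/closure data reside and are what must be absorbed into $C^{*N}$ (equivalently, $C^{*N}$ is the discrete analogue of the boundary term $[\lambda x]_{t_0}^{t_f}$ generated by forming the adjoint of the differential relation). I would make this rigorous either by promoting $x_0,x_N$ to variables carrying their own equality constraints and tracking the resulting endpoint multipliers, or by partitioning $\bA$, $\bA^\dag$ and the multiplier vectors into interior and boundary blocks and verifying that the interior block reproduces the three displayed equations while the boundary block defines $C^{*N}$; in both routes one must check that the chosen discretization $\bA$ imposes the boundary conditions compatibly with the partition, which is the substantive hypothesis hidden in the hedge. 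Modulo that accounting, the lemma is an immediate consequence of KKT stationarity together with the symmetry of $\bQ$.
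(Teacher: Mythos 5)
Your proposal is correct and is precisely the ``application of the multiplier theory'' that the paper invokes and then omits: the paper's own proof is a single sentence with no details, and your KKT derivation (Lagrangian with multipliers $\Psi^N_A$, $\Psi^N_d$, stationarity in $V^N$, $X^N$, $U^N$, then conjugation by $\bQ$ and absorption of the endpoint rows into $C^{*N}$) supplies exactly the missing content, including the correct reading of the boundary hedge. One small slip in your closing aside: the induced costate discretization has $\bA^*_\lambda = -\boldsymbol{I}^N$ (the coefficient of $\overline{\Psi}^N_d = \Lambda^N$) and $\bA^*_\omega = \bA^\dag_v$ (the coefficient of $\overline{\Psi}^N_A = \Omega^N$), so you have the two labels swapped relative to the conditions \eqref{eq:tunnel-conditions} of Theorem~\ref{theorem:tunnel}; this does not affect the proof of the lemma itself.
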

\begin{proof}
This lemma can be proved by an application of the multiplier theory.  Details are omitted for brevity.
\end{proof}
\\

It follows from Lemma~\ref{lemma:pre-CMT4D} that if $\bA^\dag$ (and hence $\bA$) can be chosen such that its $x$- and $v$-components constitute $\bA^*$ of \eqref{eq:PlamDN}, then, it will be possible to commute the noncommutative operations inherent in the cotangent barrier, except that this would have been achieved in the discretized space.  This idea is captured in Fig.~\ref{fig:cotangentT} and the statement of its possibility is enunciated as the \textbf{\emph{tunnel theorem}}.
%
%
   \begin{figure}[h!]
      \centering
      \framebox{\parbox{0.9\columnwidth}{
      \centering
      {\includegraphics[angle=-0, width = 0.9\columnwidth]{cotangentT2}}
      \caption{\textsf{Discrete tunneling allows $(P^{D, \lambda, N})$ to move through the cotangent barrier to connect with $(P^{\lambda, D, N})$.}}
    \label{fig:cotangentT}
    }}
   \end{figure}
%
\begin{theorem}[A Tunnel Theorem]\label{theorem:tunnel}
Suppose $\bA$ and $\bA^*$ are discretization matrix pairs that satisfy the conditions,
\begin{subequations}\label{eq:tunnel-conditions}
\begin{align}
\bA_x  &= -\boldsymbol{I}^N \\
\bA^*_\lambda &= -\boldsymbol{I}^N \\
\bA^*_\omega  &= \bA^\dag_v
\end{align}
\end{subequations}
then, the necessary conditions for Problem $(P^{D,N})$ are equivalent to $(P^{\lambda,D,N})$ under the transformation,
\begin{subequations}\label{eq:tunnel-transforms}
\begin{align}
\overline{\Psi}^N_d &= \Lambda^N \\
\overline{\Psi}^N_A  &= \Omega^N
\end{align}
\end{subequations}
\end{theorem}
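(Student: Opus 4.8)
\emph{Proof proposal.} The plan is to establish a term-by-term identification between the first-order necessary conditions of $(P^{D,N})$ in \eqref{eq:probPDN} and DIDO's generalized equation $(P^{\lambda,D,N})$ in \eqref{eq:PlamDN}, letting Lemma~\ref{lemma:pre-CMT4D} carry the dual side of the bookkeeping. First I would assemble the complete set of necessary conditions for $(P^{D,N})$: on the primal side, feasibility gives $\bA(X^N; V^N) = C^N$ and $f(X^N, U^N) - V^N = 0$; on the dual side, Lemma~\ref{lemma:pre-CMT4D} already packages the three stationarity/multiplier equations in terms of the $\bQ$-scaled multipliers $\overline{\Psi}^N_A$ and $\overline{\Psi}^N_d$, namely $\bA^\dag_v \overline{\Psi}^N_A - \overline{\Psi}^N_d = C^{*N}$, $\partial_x H(\overline{\Psi}^N_d, X^N, U^N) + \bA^\dag_x \overline{\Psi}^N_A = 0$, and $\partial_u H(\overline{\Psi}^N_d, X^N, U^N) = 0$.

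The second step is to feed in the hypotheses \eqref{eq:tunnel-conditions}. From $\bA_x = -\bI^N$ and symmetry of the identity, $\bA^\dag_x = \bQ^{-1}\bA_x^T\bQ = \bQ^{-1}(-\bI^N)\bQ = -\bI^N$, so the $x$-stationarity equation collapses to $\partial_x H(\overline{\Psi}^N_d, X^N, U^N) - \overline{\Psi}^N_A = 0$. Applying the relabeling \eqref{eq:tunnel-transforms}, $\overline{\Psi}^N_d = \Lambda^N$ and $\overline{\Psi}^N_A = \Omega^N$, this is exactly the fourth line of \eqref{eq:PlamDN}, and the $u$-stationarity equation becomes its fifth line verbatim. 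For the multiplier-defining equation, substituting $\bA^*_\lambda = -\bI^N$ and $\bA^*_\omega = \bA^\dag_v$ shows that $\bA^\dag_v \Omega^N - \Lambda^N = C^{*N}$ is precisely $\bA^*(\Lambda^N; \Omega^N) = C^{*N}$, i.e. the second line of \eqref{eq:PlamDN}. The first line is the primal dynamics constraint copied over, and the third line follows from the elementary identity $\partial_\lambda H = f$ implied by $H = F + \lambda^T f$, which turns the algebraic constraint $f(X^N, U^N) - V^N = 0$ into $\partial_\lambda H(\Lambda^N, X^N, U^N) - V^N = 0$. Thus all five equations of $(P^{\lambda,D,N})$ are reproduced.

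Finally, since \eqref{eq:tunnel-transforms} is a bijection, being nothing more than a renaming of variables, the correspondence runs in both directions, so the necessary conditions for $(P^{D,N})$ and the system $(P^{\lambda,D,N})$ are equivalent, as claimed. I do not expect a deep obstacle here; the only delicate point is bookkeeping, namely making sure the boundary/transversality contributions hidden in the ``appropriate technical conditions at the boundary points'' of Lemma~\ref{lemma:pre-CMT4D} are exactly the ones absorbed into $C^{*N}$ in the definition of $(P^{\lambda,D,N})$, and that the $\bQ$-scaling relating $\Psi^N$ to $\overline{\Psi}^N$ is tracked consistently so the matches above are genuine identities rather than mere proportionalities. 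Once Lemma~\ref{lemma:pre-CMT4D} is granted, the remainder is the short substitution just sketched. $\Box$
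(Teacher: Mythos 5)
Your proposal is correct and follows essentially the same route as the paper, which proves the theorem in one line by invoking Lemma~\ref{lemma:pre-CMT4D} and substituting \eqref{eq:tunnel-conditions} and \eqref{eq:tunnel-transforms} into the definitions of $(P^{D,N})$ and $(P^{\lambda,D,N})$; you have simply written out that substitution explicitly (including the useful observations that $\bA^\dag_x = -\boldsymbol{I}^N$ and that $\partial_\lambda H = f$ converts the algebraic constraint into the third line of \eqref{eq:PlamDN}). Your closing caveat about the boundary terms absorbed into $C^{*N}$ is the same caveat the paper hides inside the ``appropriate technical conditions'' of Lemma~\ref{lemma:pre-CMT4D}, so nothing further is needed.
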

\begin{proof}
The proof of this theorem follows from Lemma~\ref{lemma:pre-CMT4D} and by a substitution of \eqref{eq:tunnel-conditions} and \eqref{eq:tunnel-transforms} in the the definitions of $(P^{D,N})$ and $(P^{\lambda,D,N})$.
\end{proof}

\begin{definition}\label{def:DIDO-HPP}
$(P^{D,N})$ is called a \textbf{DIDO-Hamiltonian programming problem} if $\bA$ and $\bA^*$ are chosen in accordance with Theorem~\ref{theorem:tunnel}.
\end{definition}
See \cite{ross:direct-shooting} for a first-principles introduction to the notion of \emph{\textbf{Hamiltonian programming}}.  A DIDO-Hamiltonian programming problem is simply an adaptation of this terminology to Problem~$(P^{D,N})$.
\begin{theorem}\label{theorem:Lag=not}
All Lagrange PS discretizations over any grid (including Gauss, Radau, and Lobatto) fail to satisfy the conditions of Theorem~\ref{theorem:tunnel}.
\end{theorem}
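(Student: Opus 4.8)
The plan is to defeat the very first requirement in \eqref{eq:tunnel-conditions}, namely $\bA_x = -\bI^N$: since Theorem~\ref{theorem:tunnel} demands all three conditions simultaneously, showing that this one cannot hold for any Lagrange PS scheme on any grid suffices. First I would pin down the canonical form that a Lagrange PS discretization is forced to take. Writing the state polynomial in the nodal (cardinal) basis $\{\ell_k\}_{k=0}^N$ of the grid $\pi^N$ as $x(\tau)\approx\sum_k x_k\,\ell_k(\tau)$ and enforcing the virtual-control identity $\dot x = v$ by collocation at the grid nodes yields $\sum_k D_{ik}\,x_k = v_i$ for each $i$, i.e. $D\,X^N - \bI^N V^N = \bzero$, where $D := [\dot\ell_k(\tau_i)]$ is the pseudospectral differentiation matrix of $\pi^N$. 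In the notation of \eqref{eq:AXVN=0} this is precisely $\bA_x = D$, $\bA_v = -\bI^N$, $C^N = \bzero$; the adjoint relation $-\dot\lambda = \omega$ discretizes identically to $\bA^*_\lambda = D$, $\bA^*_\omega = \bI^N$ in the notation of \eqref{eq:Astar-def}, so the same obstruction will reappear in the second condition as well.

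The key observation is then grid-independent and elementary. The nodal basis is a partition of unity, $\sum_{k=0}^N \ell_k(\tau) \equiv 1$, because the unique polynomial of degree $\le N$ interpolating the constant function through $N+1$ distinct points is that constant; differentiating and evaluating at any node gives $\sum_{k=0}^N D_{ik} = 0$ for every $i$. Hence $D$ annihilates the all-ones vector $\mathbf{1} := (1,\dots,1)^T \ne \bzero$ and is therefore singular --- on Gauss, Radau, Lobatto, or any other grid with $N+1 \ge 2$ distinct nodes. Since $-\bI^N$ is nonsingular (its row sums equal $-1 \ne 0$), we get $\bA_x = D \ne -\bI^N$ outright. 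This gap cannot be closed by rewriting the discretization: any affine system with the same solution set is obtained by left-multiplying $D\,X^N - \bI^N V^N = \bzero$ by an invertible matrix $M$, producing $\widetilde{\bA}_x = MD$, which remains singular; and appending a boundary-condition row leaves the collocation rows --- with their vanishing row sums --- intact, so the augmented $\bA_x$ still differs from $-\bI^N$. Consequently $\bA_x = -\bI^N$ is unattainable and the hypotheses of Theorem~\ref{theorem:tunnel} fail, so no Lagrange PS scheme yields a DIDO-Hamiltonian programming problem in the sense of Definition~\ref{def:DIDO-HPP}. In the formulations where collocation is imposed at only a proper subset of the nodes, as in some Gauss and Radau variants, $D$ and hence $\bA_x$ is rectangular and the conclusion is trivial.

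I expect the main obstacle to be the universal quantifier ``all Lagrange PS discretizations over any grid'': the substantive work is the reduction step, namely arguing that every bona fide Lagrange PS scheme necessarily routes the time derivative through the (singular) differentiation matrix $D$, and doing so uniformly across the various endpoint/collocation conventions and for whatever choice of $C^N$ and appended boundary rows a particular method uses, so that no reformulation can slip past the singularity of $D$. Once that reduction is in hand, the partition-of-unity identity delivers the singularity of $D$ in one line and the theorem follows immediately.
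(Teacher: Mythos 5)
Your proof follows the same route as the paper's: both reduce a Lagrange PS discretization of $\dot x = v$ to the collocation form built on the differentiation matrix $\bD$ of the grid, identify $\bA_x$ with $\pm\bD$ (the sign depends only on which side of the equation you normalize), and conclude that the first condition $\bA_x = -\boldsymbol{I}^N$ of Theorem~\ref{theorem:tunnel} cannot hold. The difference is that the paper stops at the assertion that $\bA_x = -\bD$, $\bA_v = \boldsymbol{I}^N$ ``violates the conditions,'' whereas you actually close the remaining logical gap: you prove via the partition-of-unity identity $\sum_k \ell_k \equiv 1$ that $\bD$ has vanishing row sums, hence annihilates the all-ones vector and is singular on every grid, so it can never equal a (nonsingular) multiple of the identity; and you further observe that no equivalent reformulation --- left-multiplication by an invertible matrix or appending boundary rows --- can repair this, since $M\bD$ stays singular. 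That extra step is worth having, because the theorem's quantifier is over all Lagrange PS discretizations and the paper's one-line argument implicitly assumes the collocation form is canonical; your singularity argument makes the obstruction invariant under the rewritings a skeptic might try. The only caution is the reduction step you yourself flag: the claim that every Lagrange PS scheme routes $\dot x$ through $\bD$ is taken as the definition of ``Lagrange PS'' here (as it is in the paper), so both proofs share that as a premise rather than a derived fact.
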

\begin{proof}
The proof of this theorem is fairly straightforward.  It follows from the fact that all Lagrange PS discretizations\cite{wang, birk-koeppen,PSReview-ARC-2012} over any grid\cite{advances, arb-grid-aas,arb-grid} are based on a differentiation matrix, $\bD$.  Hence all Lagrange PS discretizations of $\dot x = v$ are given by,
\begin{equation}
\bD X^N = V^N \quad \Rightarrow \quad V^N - \bD X^N = \bzero
\end{equation}
This implies $\bA_x = -\bD$ and $\bA_v = \boldsymbol{I}^N$ which violates the conditions of Theorem~\ref{theorem:tunnel}.
\end{proof}
\begin{theorem}\label{theorem:Birk=tunnel}
A collection of Birkhoff PS discretizations of optimal control problems satisfy the conditions of Theorem~\ref{theorem:tunnel}.
\end{theorem}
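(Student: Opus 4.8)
\emph{Proof sketch (plan).} The plan is to exhibit an explicit family of discretization pairs $\set{\bA,\bA^{*}}$ built from Birkhoff \emph{integration} matrices (rather than differentiation matrices), and then to verify the three conditions \eqref{eq:tunnel-conditions} essentially by inspection, leaving a single algebraic identity as the real work. Recall that a left-anchored Birkhoff PS discretization of $\dot x = v$ interpolates $x$ from its nodal values together with the single anchored value $x(t_0)$, which produces a Birkhoff integration matrix $\bB$ so that the discretized dynamics read $X^N = x_0\,\mathbf{1} + \bB\,V^N$; see \cite{wang,birk-koeppen,birk-TN} for the construction and \cite{lorentz,finden} for the underlying Birkhoff-interpolation theory. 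Writing this in the form \eqref{eq:AXVN=0} and multiplying the linear relation by $-1$ (which leaves the constraint set unchanged) gives $\bA_x = -\boldsymbol{I}^N$, $\bA_v = \bB$, and $C^N = -x_0\,\mathbf{1}$, so the first condition of Theorem~\ref{theorem:tunnel} holds by construction.

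Next I would fix the co-virtual discretization to be the \emph{right}-anchored Birkhoff PS discretization of $\dot\lambda = -\omega$ over the \emph{same} node set, i.e. the scheme that interpolates $\lambda$ from its nodal values plus the terminal value $\lambda(t_f)$. Integrating from $t$ to $t_f$ and discretizing, this reads $\Lambda^N = \lambda(t_f)\,\mathbf{1} + \bB^{*}\Omega^N$, where $\bB^{*}$ is the corresponding right Birkhoff integration matrix. After the same sign normalization this takes the form of \eqref{eq:Astar-def} with $\bA^{*}_\lambda = -\boldsymbol{I}^N$ and $\bA^{*}_\omega = \bB^{*}$, so the second condition of Theorem~\ref{theorem:tunnel} is immediate as well. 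The theorem is then reduced to the single matrix identity
\[
\bB^{*} \;=\; \bA^\dag_v \;=\; \bQ^{-1}\bB^{T}\bQ ,
\]
that is, the right Birkhoff integration matrix must be the $\bQ$-adjoint of the left one, where $\bQ = \text{diag}(q_0,\ldots,q_N)$ is the quadrature-weight matrix attached to the chosen grid.

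The substance of the argument is therefore a \emph{discrete summation-by-parts} (discrete Fubini) identity. In the continuous setting the left- and right-integration operators $(\mathcal{I}g)(t) = \int_{t_0}^{t} g$ and $(\mathcal{I}^{*}f)(t) = \int_{t}^{t_f} f$ are exact $L^2$-adjoints of each other, with no boundary terms, by Fubini's theorem. I would transport this to the discrete level by testing $\langle \bB\,a,\,b\rangle_{\bQ}$ against the polynomial interpolants $p_a,p_b$ of the nodal data: since $\bB$ reproduces polynomial integration exactly on the relevant polynomial space and the Gauss/Radau/Lobatto quadrature encoded in $\bQ$ integrates the resulting products exactly, one obtains $\langle \bB a, b\rangle_{\bQ} = \langle \mathcal{I}p_a, p_b\rangle_{L^2} = \langle p_a, \mathcal{I}^{*}p_b\rangle_{L^2} = \langle a, \bB^{*}b\rangle_{\bQ}$ with no truncation error, hence $\bB^{T}\bQ = \bQ\,\bB^{*}$ exactly. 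The main obstacle is the bookkeeping of the anchoring/closure conditions through this chain: the three classical node families place endpoints differently (Lobatto includes both, Radau one, Gauss neither), so one must check in each case that the interpolants that appear stay within the quadrature's degree of exactness and that the anchored boundary value is absorbed consistently on both sides of the identity. Once this is verified, the pair $\set{\bA,\bA^{*}}$ meets all of \eqref{eq:tunnel-conditions}, Theorem~\ref{theorem:tunnel} applies with the transformation \eqref{eq:tunnel-transforms} ($\overline{\Psi}^N_d = \Lambda^N$, $\overline{\Psi}^N_A = \Omega^N$), and this collection of Birkhoff PS discretizations is precisely a family of DIDO-Hamiltonian programming problems in the sense of Definition~\ref{def:DIDO-HPP}; cf. \cite{ross:direct-shooting} for the Hamiltonian-programming viewpoint.
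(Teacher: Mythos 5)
Your proposal starts from exactly the same key fact as the paper's proof---that a Birkhoff PS discretization of $\dot x = v$ can be written in integration-matrix form $X^N = x_0\mathbf{1} + \bB V^N$, so that after sign normalization $\bA_x = -\bI^N$---but then it does substantially more work than the paper, which stops there and delegates the remaining details to the Birkhoff pseudospectral references. Where the two routes genuinely diverge is in how the pair $\set{\bA,\bA^*}$ is completed. In the paper's framework $\bA^*$ is a free design choice: once $\bA_x = -\bI^N$ is available, one may simply \emph{define} $\bA^*_\lambda := -\bI^N$ and $\bA^*_\omega := \bA^\dag_v = \bQ^{-1}\bA_v^T\bQ$, and the conditions \eqref{eq:tunnel-conditions} hold by construction; the only thing left to check is that this $\bA^*$ is a legitimate (consistent) discretization of $\dot\lambda=-\omega$, which is what the cited reference establishes. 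You instead construct $\bA^*$ independently as the right-anchored Birkhoff discretization and then must \emph{prove} the adjoint identity $\bB^* = \bQ^{-1}\bB^T\bQ$ via a discrete Fubini/summation-by-parts argument. That is a stronger and more informative statement (it shows the $\bQ$-adjoint system is itself a natural Birkhoff discretization of the costate equation), but it is also where your sketch has its one real exposure: the exactness chain $\langle \bB a,b\rangle_{\bQ} = \langle \mathcal{I}p_a,p_b\rangle_{L^2}$ requires the quadrature to integrate a product of degree up to $2N+1$ exactly, which holds for Gauss but not automatically for Radau ($2N$) or Lobatto ($2N-1$); you flag this, but it is precisely the case analysis that must be carried out (or absorbed into the anchoring conditions) before the identity holds without truncation error. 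If you take the paper's shortcut of defining $\bA^*_\omega$ as the adjoint outright, that entire obstacle disappears, at the cost of having to argue consistency of the resulting costate discretization separately.
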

\begin{proof}
The proof of this theorem follows from the fact that a Birkhoff PS discretization\cite{birk-koeppen,birk-TN} may be written in a form that generates $\bA_x = - \boldsymbol{I}^N$; see \cite{birk-TN} for further details.
\end{proof}
\\

\noindent
Theorems \ref{theorem:Lag=not} and \ref{theorem:Birk=tunnel} imply that there are two broad categories of PS discretizations for optimal control problems, namely Lagrange and Birkhoff. Within these two main methods of discretizations, it is possible to generate a very large number of variations based on the choice of basis functions\footnote{The basis functions need not be polynomials\cite{birk-TN,atap,stenger:sinc}. Nonpolynomial ``designer'' basis functions can also be generated via the nonlinear domain transformation $\Gamma$ used in \eqref{eq:f=v-in-tau}; see \cite{ross-book,atap,PSReview-ARC-2012,kt-map} for details. } and grid selections.  \emph{\textbf{A process to achieve at least eighteen variations of PS discretizations based on classical orthogonal polynomials is shown in Fig.~\ref{fig:PStypes}}}.
%
   \begin{figure}[h!]
      \centering
      \framebox{\parbox{0.9\columnwidth}{
      \centering
      {\includegraphics[angle=0, width = 0.9\columnwidth]{PStypes}}
      \caption{\textsf{Schematic for generating at least 18 variants of PS discretizations; figure adapted from \cite{birk-TN}.}}
    \label{fig:PStypes}
    }}
   \end{figure}
%
Despite this apparently large variety of choices, Theorems \ref{theorem:Lag=not} and \ref{theorem:Birk=tunnel} reveal that there are no essential differences between PS discretizations based on different grid points. All that matters for \emph{optimal control applications} is whether they are based on Lagrange or Birkhoff interpolants.  Nonetheless, when additional technical factors are taken into account, such as the inherent structure of the resulting inner-product space\cite{advances}, the choice of a grid can have a deleterious effect on convergence; see \cite{PSReview-ARC-2012,advances} for details.  See also \cite{ross-book}, Sec.~4.4.4.

\section*{Appendix B: DIDO's Suite of Algorithms}\label{sec:algol-suite}
As noted in Sec.~\ref{sec:intro}, DIDO's algorithm is actually a suite of algorithms. Several portions of this suite can be found in various publications scattered across different journals and conference proceedings. In this section, we collect and categorize this algorithm-suite to explain how specific algorithmic options are triggered by the particulars of the user-defined problem and inputs to DIDO.

\subsection{Introduction and Definitions}

Each main algorithm in DIDO's suite is \emph{\textbf{true-fast, spectral and Hamiltonian}}. That is, each of DIDO's main algorithm solves the DIDO-Hamiltonian programming problem (see Definition~\ref{def:DIDO-HPP}) using a modified\cite{birk-koeppen,birk-TN} spectral algorithm\cite{spec-alg,arb-grid} that is true-fast.  True-fast is different from computationally fast in the sense that the former must be agnostic to the details of its computer implementations whereas the latter can be can be achieved via a variety of simple and obvious ways.  For instance, any optimization software, including a mediocre one, can be made computationally fast by using one or more of the following:
\begin{enumerate}
\item A good/excellent ``guess;''
\item A high-performance computer;
\item A compiled or embedded code; and,
\item Analytic gradient/Jacobian/Hessian information.
\end{enumerate}
A truly fast algorithm should be \emph{independent} of these obvious and other run-time code improvements.  \emph{\textbf{For an algorithm to be true-fast, it must satisfy the following properties}} (see Fig.~\ref{fig:algorithmDots}):
\begin{enumerate}
\item Converge from an arbitrary starting point;
\item Converge to an optimal/extremal solution;\footnote{An extremal solution is one that satisfies Pontryagin's Principle.}
\item Take the fewest number of iterations towards a given tolerance criterion; and/or
\item Use the fewest number of operations per iteration.
\end{enumerate}
%
%
%
%
   \begin{figure}[h!]
      \centering
      \framebox{\parbox{0.9\columnwidth}{
      \centering
      {\includegraphics[angle=0, width = 0.9\columnwidth]{algorithmDots}}
      \caption{\textsf{Schematic for defining an algorithm and its fundamental properties.}}
    \label{fig:algorithmDots}
    }}
   \end{figure}
%
Thus, a true-fast algorithm can be easily made computationally fast, but its implementation may not necessarily be computationally fast.  Likewise, a purportedly computationally fast algorithm may not necessarily be true-fast. DIDO achieves true-fast speed by calling several true-fast algorithms from its toolbox.  The specifics of a given problem and user inputs initiate automatic triggers to generate a coordinated DIDO-iteration that defines the flow of the main algorithm. To describe this algorithm suite, we use with the following definitions from \cite{ross:direct-shooting}:
\begin{definition}[Inner Algorithm $\mathcal{A}$]\label{def:inner-algol}
Let $(P^N)$ denote any discretization of $(P)$. An inner algorithm $\mathcal{A}$ for Problem~$(P)$ is defined as finding a sequence of vector pairs $(X^N_0, U^N_0), (X^N_1, U^N_1), \ldots $
by the iterative map,
$$(X^N_{i+1}, U^N_{i+1}) = \mathcal{A}(X^N_i, U^N_i), \ i \in \mathbb{N}$$
\end{definition}
Note that $N$ is fixed in the definition of the inner algorithm.
\begin{definition}[Convergence of an Inner Algorithm]\label{def:inner-conv}
An inner algorithm $\mathcal{A}$ is said to converge if
$$\lim_{n \to \infty} X^N_n := X^N_\infty , \qquad \lim_{n \to \infty} U^N_n  := U^N_\infty $$
is an accumulation point that solves Problem $(P^N)$.
\end{definition}
\begin{definition}[Hamiltonian Algorithm $\mathcal{A}^\lambda$ ]
Let $(P^{\lambda, N})$ denote any discretization of $(P^\lambda)$.  A (convergent) inner algorithm $\mathcal{A}^\lambda$ is said to be Hamiltonian if it generates an additional sequence of vectors $\Lambda^N_0, \Lambda^N_1,  \ldots $ for a fixed $N$ such that
$$\lim_{n \to \infty} \Lambda^N_n := \Lambda^N_\infty$$
is an accumulation point that solves $(P^{\lambda, N})$.
\end{definition}
\begin{remark}
Note that the Hamiltonian algorithm is focused on solving $(P^{\lambda, N})$ not $(P^{N, \lambda})$.  In general, $(P^{\lambda, N}) \ne (P^{N, \lambda})$ for finite $N$ even if they are equal in the limit as $N \to \infty$.
\end{remark}
\begin{lemma}\label{lemma:ross-funda-1}
Suppose $(P^{\lambda, N}) = (P^{N, \lambda})$ for finite $N$.   If $(P^{N, \lambda})$ is solved to a practical tolerance of $\delta^{N, \lambda} > 0$, then, the resulting solution solves $(P^{\lambda, N})$ to a tolerance of  $\delta^{\lambda, N} > 0$ such that
\begin{equation}\label{eq:ross-funda-lemma-ineq}
\delta^{\lambda, N} > \delta^{N, \lambda}
\end{equation}
\end{lemma}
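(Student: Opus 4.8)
The plan is to read the hypothesis $(P^{\lambda,N}) = (P^{N,\lambda})$ as the statement that the ``cotangent barrier'' defect of \eqref{eq:cotan-barrier} has been removed at resolution $N$ — exactly the situation secured by Theorem~\ref{theorem:tunnel} — so that the only remaining discrepancy between ``solving the discretized primal program'' and ``satisfying the discretized necessary conditions'' is a \emph{numerical-tolerance} gap rather than a structural one. The strict inequality \eqref{eq:ross-funda-lemma-ineq} will then be produced by the square-root relation \eqref{eq:sqrt-error}, which compares the two natural notions of ``practical tolerance'': an objective-decrease criterion for the primal solve versus an equation-residual criterion for the necessary conditions.

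First I would make the two tolerances explicit. ``Solving $(P^{N,\lambda})$ to a practical tolerance $\delta^{N,\lambda}$'' means running an optimization on the discretized primal program $(P^{D,N})$ and stopping once its objective progress has dropped to $\delta^{N,\lambda}$; the multipliers $\Lambda^N$ of $(P^{N,\lambda})$ then emerge only as a byproduct of that iterate. ``The resulting solution solves $(P^{\lambda,N})$ to tolerance $\delta^{\lambda,N}$'' means evaluating the residual of the full root-finding system \eqref{eq:PlamDN} at that same iterate — most importantly its Hamiltonian-stationarity block $\partial_u H(\Lambda^N, X^N, U^N) = 0$.

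Next I would run the Taylor argument of \eqref{eq:derive-sqrt-error-1}--\eqref{eq:derive-sqrt-error-2} in reverse: about the fixed-$N$ minimizer $U^N_\star$, the objective error is second order in $\| U^N - U^N_\star \|$, while the stationarity residual $\partial_u H(\Lambda^N, X^N, U^N)$ and the error induced in the costate $\Lambda^N$ are first order. Hence an objective tolerance $\delta^{N,\lambda}$ certifies only $\| U^N - U^N_\star \| \sim \mathcal{O}\big(\sqrt{\delta^{N,\lambda}}\big)$ by \eqref{eq:sqrt-error}, and therefore a stationarity (and costate) residual of the same order; since the affine $\bA$-$\bA^{*}$ blocks of \eqref{eq:PlamDN} are feasibility-only and are carried along at the tighter level, the residual of $(P^{\lambda,N})$ is governed by the stationarity block, so $\delta^{\lambda,N} \sim \sqrt{\delta^{N,\lambda}}$. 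For $0 < \delta^{N,\lambda} < 1$ this gives $\delta^{\lambda,N} > \delta^{N,\lambda}$, which is \eqref{eq:ross-funda-lemma-ineq}. An equivalent route, the one emphasized in \cite{ross:direct-shooting}, replaces the square-root estimate by the step-size damping of a first-order primal update $\Delta U^N = -\alpha\,\partial_u H$ with $\alpha < 1$: halting at step-norm $\delta^{N,\lambda}$ leaves the control-gradient residual at $\delta^{N,\lambda}/\alpha > \delta^{N,\lambda}$.

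The main obstacle is keeping the comparison from being circular: one has to fix, once and for all, that the primal tolerance is an objective/step criterion whereas the necessary-condition tolerance is an equation-residual criterion — otherwise, with both measured the same way and under the hypothesis that the two systems coincide, one would trivially get equality rather than \eqref{eq:ross-funda-lemma-ineq}. With that convention pinned down, \eqref{eq:sqrt-error} does the work, provided the discretization is well conditioned so that the Hessian factor in \eqref{eq:derive-sqrt-error-2} is $\Theta(1)$; degenerate or ill-conditioned cases would require a separate accounting, but they fall outside the ``perfectly conditioned'' setting in which \eqref{eq:sqrt-error} is stated.
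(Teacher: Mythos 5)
Your proposal is sound in spirit, but it is worth knowing that the paper itself offers essentially no in-text argument: its ``proof'' consists of citing \cite{ross:direct-shooting}, where the inequality is established for a direct shooting method with Euler discretization, and then asserting that the general case follows from the hypothesis $(P^{\lambda,N}) = (P^{N,\lambda})$. The mechanism in that reference is precisely your \emph{second} route --- the primal update is the Hamiltonian gradient damped by a step size $\alpha<1$, so terminating on the damped quantity at level $\delta^{N,\lambda}$ leaves the undamped residual $\partial_u H$ at roughly $\delta^{N,\lambda}/\alpha > \delta^{N,\lambda}$ --- which the main text of the paper also alludes to in Sec.~3 (``the gradient of the cost function is damped by a `step size' when associated with the gradient of the Hamiltonian''). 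Your \emph{primary} route, via the square-root relation \eqref{eq:sqrt-error}, is a genuinely different argument: the paper deploys that estimate only later, in Sec.~5, to bound the achievable accuracy of computed controls, not to prove this lemma. Both routes deliver the strict inequality only under conventions you are right to make explicit --- tolerances below unity for the square-root route, $\alpha<1$ for the damping route, and, crucially, the asymmetric reading of the two tolerances (objective/step criterion for the primal solve versus equation-residual criterion for the necessary conditions), without which the hypothesis would force equality. That interpretive step is unavoidable given how loosely the lemma is stated, and your version is more honest about it than the paper's; the trade-off is that your square-root route quietly assumes a well-conditioned Hessian, whereas the damping route needs only the existence of a step size strictly less than one, which is why the cited reference (and hence the paper) leans on the latter.
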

\begin{proof}
This lemma is proved in \cite{ross:direct-shooting} for a shooting method and Euler discretization. Its extension to the general case as given by \eqref{eq:ross-funda-lemma-ineq} follows from the assumption that $(P^{\lambda, N}) = (P^{N, \lambda})$ for finite $N$.
\end{proof}
\\

\noindent
From \eqref{eq:ross-funda-lemma-ineq} it follows that $\delta^{\lambda, N} \ne \delta^{N, \lambda}$.  A Hamiltonian algorithm fixes this discrepancy.

As explained in \cite{ross:direct-shooting}, Problem~$(P^N)$ is better categorized as a Hamiltonian programming problem rather than as a nonlinear programming problem.  This is, in part, because \textbf{\emph{a Hamiltonian is not a Lagrangian}} and Problem~$(P^N)$ contains ``hidden'' information that is not accessible via $(P^{N,\lambda})$. A Hamiltonian algorithm aims to solve $(P^{\lambda, N})$ while a nonlinear programming algorithm attempts to solve $(P^{N, \lambda})$.  Even when these two problems are theoretically equivalent to one another, a nonlinear programming algorithm does not solve $(P^{\lambda, N})$ to the same accuracy as $(P^{N, \lambda})$ as shown in Lemma~\ref{lemma:ross-funda-1}; hence, a Hamiltonian algorithm is needed to replace or augment generic nonlinear programming algorithms. These ideas are depicted in Fig.~\ref{fig:HPPvenn}.
%
   \begin{figure}[h!]
      \centering
      \framebox{\parbox{0.9\columnwidth}{
      \centering
      {\includegraphics[angle=0, width = 0.9\columnwidth]{HPPvenn}}
      \caption{\textsf{A depiction of the differences between nonlinear and Hamiltonian programming.}}
    \label{fig:HPPvenn}
    }}
   \end{figure}
%
Specific details in augmenting a nonlinear programming algorithm with a Hamiltonian component may be found in \cite{biegler2014,biegler2016,hamilMeshRef}.  Alternative implementations that achieve similar effects are discussed in \cite{acc:hybrid,arb-grid-aas,arb-grid,cheby:+cmt}.  Note also that in Hamiltonian programming, the separable-programming-type property inherent in $(P^{D, N})$ is naturally exploited.  In the multi-variable case, this property lends itself to casting the discretized problem in terms of a compact matrix-vector programming problem rather than a generic a nonlinear programming problem\cite{birk-TN}.

\begin{definition}[DIDO-Hamiltonian Algorithm $\mathcal{A}^{\lambda, D}$]\label{def:DIDO-c-inner}
A Hamiltonian algorithm $\mathcal{A}^{\lambda, D}$ is said to be DIDO-Hamiltonian if it generates an additional sequence of vector pairs $(V^N_0 \Omega^N_0), (V^N_1, \Omega^N_1),  \ldots $ for a fixed $N$ such that
$$\lim_{n \to \infty} V^N_n := V^N_\infty, \quad \lim_{n \to \infty} \Omega^N_n := \Omega^N_\infty$$
is an accumulation point that solves Problem $(P^{\lambda, D, N})$.
\end{definition}
\begin{remark}\label{remark:lemma-14c}
Because $(P^{\lambda, D}) \not\equiv  (P^{D, \lambda})$  (see \eqref{eq:cotan-barrier}), we have $(P^{\lambda, D, N}) \not\equiv  (P^{D, \lambda, N})$ unless the discretization offers a cotangent tunnel; see Theorem~\ref{theorem:tunnel}.  Even if $(P^{\lambda, D, N}) \equiv  (P^{D, \lambda, N})$, it follows from Lemma~\ref{lemma:ross-funda-1} that $\delta^{\lambda, D, N} \ne  \delta^{D, \lambda, N}$, where $\delta^{\lambda, D, N}$ and  $\delta^{D, \lambda, N}$ denote the constraint tolerances on $(P^{\lambda, D, N})$ and $(P^{D, \lambda, N})$ respectively.  The DIDO-Hamiltonian algorithm fixes this discrepancy.
\end{remark}
A solution to Problem~$(P^N)$ does not imply a solution to Problem~$(P)$.  In fact, an optimal solution to Problem~$(P^N)$ may not even be a feasible solution to Problem~$(P)$. This point is often lost when nonlinear programming solvers are patched to solve discretized optimal control problems.  To clarify this point, we use the following definitions from \cite{ross:direct-shooting}:
\begin{definition}[Convergence of a Discretization]\label{def:disc-conv}
Let $(X^N_\infty, U^N_\infty)$ be a solution to Problem~$(P^N)$.  A discretization is said to converge if
$$\lim_{N \to \infty} X^N_\infty := X^\infty_\infty , \qquad \lim_{N \to \infty} U^N_\infty := U^\infty_\infty $$
is an accumulation point that solves Problem $(P)$.
\end{definition}
\begin{remark}
In Definition~\ref{def:disc-conv} and subsequent ones to follow, we have taken some mathematical liberties with respect to the precise notion of a metric for convergence.  It is straightforward to add such precision; however, it comes at a significant cost of new mathematical machinery.  Consequently, we choose not to include such additional mathematical nomenclature in order to support the accessibility of the proposed concepts to a broader audience.
\end{remark}
\begin{definition}[Dynamic Optimization Algorithm $\mathcal{B}$]\label{def:dyn-opt}
Let $\mathcal{B}$ be an algorithm that generates a sequence of integer pairs $(N_0, m_0),$ $(N_1, m_1) \cdots $ such that the sequence
$$(X^{N_0}_{m_0}, U^{N_0}_{m_0}), (X^{N_1}_{m_1}, U^{N_1}_{m_1}), \ldots  $$
converges to an accumulation point $(X^\infty_\infty, U^\infty_\infty)$ given by Definition~\ref{def:disc-conv}. Then, $\mathcal{B}$ is called a dynamic optimization algorithm for solving Problem $(P)$.
\end{definition}
%
From Definitions \ref{def:disc-conv} and \ref{def:dyn-opt}, it follows that a generic dynamic optimization algorithm involves the production of a double infinite sequence of vectors\cite{ross:direct-shooting}.  From Definition~\ref{def:disc-conv} it follows that the production of this double infinite sequence may be greatly facilitated by the use of an inner algorithm; see Definition~\ref{def:inner-algol}. From Definition~\ref{def:dyn-opt}, it follows that if an inner algorithm is used to construct $\mathcal{B}$, then it need not even be executed to completion! This idea is part of DIDO's process for generating a true-fast optimization algorithm.

Although the idea of not taking the inner algorithm to completion was motivated by Definition~\ref{def:dyn-opt}, the definition itself is driven by the theory of consistent approximations\cite{spec-alg,Kang_2008_convergence, TAC:linearizable,polak:book,CMT:CDC06}.  In fact, $m_k < \infty$ for $k < \infty$ is actually a theoretical requirement for consistency. Consistency requires that $\delta^{\lambda, N} > \epsilon^N > 0$ (see Lemma \ref{lemma:ross-funda-1}) where $\epsilon^N$ must be sufficiently large for finite $N$ and must have the property $\epsilon^N \to 0$ as $N \to \infty$. The rate at which $\epsilon^N$ goes to zero cannot be faster than the order of convergence; for example, $\epsilon^N \to 0$ cannot be faster than linear for Euler approximations.  For PS approximations, we may choose $\epsilon^N \to 0$ as $N \to \infty$ at the spectral or near-exponential rates.

\begin{definition}[Hamiltonian Dynamic Optimization Algorithm $\mathcal{B}^\lambda$]\label{def:DIDO-old-dyn-opt}
A dynamic optimization algorithm $\mathcal{B}^\lambda$ is said to be Hamiltonian if it generates an additional sequence of vectors
$$\Lambda^{N_0}_{m_0}, \Lambda^{N_1}_{m_1},  \ldots  $$
such that they converge to an accumulation point $(X^\infty_\infty, U^\infty_\infty, \Lambda^\infty_\infty)$ that solves $(P^\lambda)$.

\end{definition}

%
\begin{definition}[DIDO-Hamiltonian Dynamic Optimization Algorithm]\label{def:DIDO-dyn-opt}
A Hamiltonian dynamic optimization algorithm $\mathcal{B}^{\lambda, D}$ is said to be  DIDO-Hamiltonian if it generates an additional sequence of vector pairs
$$(V^{N_0}_{m_0}, \Omega^{N_0}_{m_0}), (V^{N_0}_{m_0}, \Omega^{N_1}_{m_1}),  \ldots  $$
that converges to an accumulation point $(X^\infty_\infty, U^\infty_\infty, V^\infty_\infty, \Lambda^\infty_\infty, \Omega^\infty_\infty)$ that solves $(P^{\lambda, D})$.

\end{definition}
%

It is clear that Definition~\ref{def:DIDO-dyn-opt} relies on Definitions~\ref{def:DIDO-HPP}--\ref{def:DIDO-old-dyn-opt}. From these definitions it follows that DIDO-Hamiltonian dynamic optimization algorithm generates a double-infinite sequence of tuples,
\begin{multline}\label{eq:dido-c-sequence}
(X, U, V, \Lambda, \Omega)^{N_0}_{m_0}, (X, U, V, \Lambda, \Omega)^{N_1}_{m_1}, \ldots, (X, U, V, \Lambda, \Omega)^{N_f}_{m_f},\\
\ldots, (X, U, V, \Lambda, \Omega)^{\infty}_{\infty}
\end{multline}
whose limit point purportedly converges to a solution of $P^{\lambda, D}$.

\subsection{Overview of The Three Major Algorithmic Components of DIDO}

The production of each element of the sequence denoted in \eqref{eq:dido-c-sequence} is determined by suite of algorithms that call upon different inner-loop DIDO-Hamiltonian algorithms (see Definition~\ref{def:DIDO-c-inner}).  The specific calls on the inner loop are based upon the performance of the outer loop (see Definition~\ref{def:DIDO-dyn-opt}).  DIDO uses different inner loops to support its three performance goals of robustness, speed and accuracy as stated in Secs.~\ref{sec:intro} and \ref{sec:overview}.  Consequently, the three major algorithmic components of DIDO's suite of algorithms comprise (see Fig.~\ref{fig:DIDOMain3-repeat}):
\begin{enumerate}
\item A stabilization component: The task of this component of the algorithm is to drive an ``arbitrary'' tuple $(X, U, V, \Lambda, \Omega)^N_m$ to an ``acceptable'' starting point for the acceleration component.
\item An acceleration component: The task of this suite of algorithms is to rapidly guide the sequence of \eqref{eq:dido-c-sequence} to a capture zone of the accurate component.
 \item An accurate component: The task of this component is to generate a solution that satisfies the precision requested by the user.
\end{enumerate}
%
   \begin{figure}[h!]
      \centering
      \framebox{\parbox{0.9\columnwidth}{
      \centering
      {\includegraphics[angle=0, width = 0.9\columnwidth]{DIDOMain3}}
      \caption{\textsf{A schematic for the concept of the three main algorithmic components of DIDO. Same as Fig.~\ref{fig:DIDOMain3}, repeated here for ready reference.}}
    \label{fig:DIDOMain3-repeat}
    }}
   \end{figure}
%
%
The three-component concept is based on the overarching theory introduced in \cite{rossJCAM-1}.  According to this theory, an algorithm may be viewed as a discretization of a controlled dynamical system where the control variable is a search vector or its rate of change\cite{ross-accel}.  In the case of static optimization, the latter case generates accelerated optimization methods that include Polyak's heavy ball method, Nesterov's accelerated gradient method and the conjugate gradient algorithm\cite{ross-accel}. In applying this theory for dynamic optimization, the DIDO-sequence given by \eqref{eq:dido-c-sequence} may be viewed as a discretization of a continuous primal-dual dynamical system.  The convergence theory proposed in \cite{rossJCAM-1,ross-accel} rests on the guidability (i.e., a weaker form of controllability) of this primal-dual system.  \textbf{\emph{New minimum principles}}\cite{rossJCAM-1,ross-accel} based on Lyapunov functions provides sufficient conditions for guidability.  The specifics of a Lyapunov-stable algorithm rely on the local convexity of the continuous search trajectory. Because convexity cannot be guaranteed along the search path (see Fig.~\ref{fig:DIDOMain3-repeat}), a suite of algorithms is needed to support different but simple objectives.  \emph{\textbf{DIDO is based on the idea that the three components of stability, acceleration and accuracy achieve the requirements of a true-fast algorithm}}. DIDO's main algorithm ties these three components together by monitoring their progress and triggering automatic switching between components based on various criteria as described in the subsections that follow.

\subsection{Details of the Three Components of DIDO}
Each of the three components of DIDO contain several subcomponents. Many of these subcomponents are described in detail across several published articles.  Consequently, we lean heavily on these articles in describing the details of DIDO's algorithms.

\subsubsection{\textbf{The Stabilization Component}}
There are three main elements to the stabilization component of DIDO:
\begin{enumerate}
\item[a)] The Guess-Free Algorithm;
\item[b)] Backtracking and Restart; and
\item[c)] Moore-Smith Sequencing.
\end{enumerate}
The details of these three elements are as follows:
\subsubsection*{a) The Guess-Free Algorithm}
In its nominal mode, DIDO is guess-free; i.e., a user does not provide any guess.  Thus, one of the elements of the stabilization component is to generate the first point of the sequence denoted in \eqref{eq:dido-c-sequence}.  Because the guess-free element itself requires a starting point, this phase of the algorithm runs through a sequence of prioritized starting points and each starting point is used as an input for an elastic programming algorithm described in \cite{guess-free} and \cite{spec-alg}. The second priority point is executed only if the first priority point terminates to an infeasible ``solution.''  If all priority points terminate to infeasibility, then DIDO will execute a final attempt at feasibility by minimizing a weighted sum of infeasibilities.  If this phase is not successful, then DIDO will present this solution to the user but tag it as infeasible.  If, at any point, a feasible solution is found, the guess-free element of the algorithm terminates with a success flag. This point is denoted as $(X, U, V, \Lambda, \Omega)^{N_0}_{m_0}$, the first point of the DIDO sequence. This is the Superior To Intuition$^{\text{TM}}$ concept first announced in 2008; see \cite{guess-free} and \cite{spec-alg}.

\subsubsection*{b) Backtracking and Restarts}
The second element of the stabilization component is triggered if the monitor algorithm detects instability (i.e., divergence) in the acceleration component. The task of this element of the stabilization component is to use a ``backtracked'' point\footnote{Because algorithmic backtracking in dynamic optimization can be computationally expensive, DIDO saves a small number of prior iterates that can be instantaneously recalled for rapid restarts.} to generate an alternative starting point for the acceleration component by reusing some of the elements of the guess-free algorithm.  The production of this alternative point is performed through a priority suite of subalgorithms that comprise restarts and elastic programming\cite{guess-free,spec-alg}.

\subsubsection*{c) Moore-Smith Sequencing}
The third element of the stabilization component is triggered if the monitor algorithm detects rapid destabilization in the acceleration component that cannot be handled by backtracking alone.  In this situation, a backtracked point is used as a starting point to generate sequential perturbations in Sobolev space\cite{PSReview-ARC-2012,Kang_2008_convergence} by generating continuous-time candidate solutions via interpolation. The perturbed discretized vector is used as starting point to generate a new sequence.  If this new sequence continues to exhibit instability, the entire process is repeated until stability is achieved or the number of Sobolev perturbations exceeds a predetermined ceiling. In the latter case, DIDO returns the last iterate as an infeasible solution.  The perturbations are also stopped if the projected variations exceed the bounds specified in the search space; see \eqref{eq:search-space} in Sec.~\ref{sec:DIDO-i/o}.  Because this sequence generation does not fit within the standard Arzel\`{a}-Ascoli theorem, we denote it as Moore-Smith\cite{yosida,McShane:moore-smith}.

\subsubsection{\textbf{The Acceleration Component}}
The nominal phase of the acceleration component is to rapidly generate the DIDO sequence (Cf.~\eqref{eq:dido-c-sequence}) subsequent to the first point handed to it by the stabilization component.  DIDO's Hamiltonian algorithm (see Definition~\ref{def:DIDO-c-inner}) produces this sequence. This algorithm uses Lemma~\ref{lemma:ross-funda-1} and a tentative value of $\delta^{\lambda, D, N_k}$ for each $N_k, k = 1, 2, \ldots$; see Remark~\ref{remark:lemma-14c}. The values of $\delta^{\lambda, D, N_k}$ for $k > 2$ are revised based on the solution generated by the previous sequence and the predicted rate of convergence.  The predicted rate of convergence is given by formulas derived in \cite{spec-alg,Kang_2008_convergence}.  Although the equations derived in \cite{spec-alg,Kang_2008_convergence} are for Lagrange PS methods, it is not too difficult to show that they are valid for Birkhoff PS methods as well because of the equivalence conditions derived in \cite{birk-koeppen}.

The theory for the fast spectral algorithm presented in \cite{spec-alg} is founded on an applications of the Arzel\`{a}-Ascoli theorem\cite{yosida} to PS discretization\cite{Kang_2008_convergence}.  Because the guarantees provided by Arzel\`{a}-Ascoli theorem are only for a subsequence, an implementation of this powerful result is not straightforward\cite{PSReview-ARC-2012,Kang_2008_convergence}.  Hence, we augment this result using the theory of optimization presented in \cite{rossJCAM-1}.  According to this theory, the DIDO sequence denoted in \eqref{eq:dido-c-sequence} may be viewed as a discretization of a controllable continuous ``search trajectory;''  see Fig.~\ref{fig:DIDOMain3-repeat}.  The dynamics of this search trajectory is given by a primal-dual controllable system that can be guided to a candidate optimal solution. Minimum principles based on Lyapunov functions provides sufficient conditions for a successful guidance algorithm\cite{rossJCAM-1,ross-accel}.  The Lyapunov function forms a merit function; and, under appropriate technical conditions\cite{rossJCAM-1,ross-accel}, the search trajectory can be designed to drive an arbitrary point to a stable point.  The stable point is defined in terms of the necessary conditions for optimality. For optimal control, these necessary conditions are given by Pontryagin's Principle\cite{ross-book}.  A first-principles' application of these ideas to analyze a direct shooting method method is described in \cite{ross:direct-shooting}.

A guarantee of convergence is achieved in terms of local convexity of a Riemannian metric space that can be defined using a regularized Hessian;\footnote{Because the ``exact'' Hessian may not be positive definite, a ``correction'' can better ensure progress towards the Lyapunov-stable point\cite{rossJCAM-1}.} see \cite{ross-accel} for theoretical details.  Because any point in an iteration can get trapped in a nonconvex region, subalgorithms are needed to detect and escape the trap regions. Detection of trap points can be made by monitoring the decrease in the Lyapunov function. Escape from suspected trap points is made by a call to the stabilization component of DIDO described previously.

An acceleration of the DIDO sequence, denoted by \eqref{eq:dido-c-sequence}, is achieved through a combination of PS discretizations\cite{lncis,PSReview-ARC-2012, birk-koeppen,birk-TN,arb-grid, cheby:+cmt}; see also Fig.~\ref{fig:PStypes}.   As shown by Theorem~\ref{theorem:Birk=tunnel}, the family of Birkhoff PS discretizations offers a special capability of tunneling through the cotangent barrier.  Consequently, an accelerated spectral sequencing is achieved by coordinating the progress of the outer loop with the inner-loop sequence as stipulated in Definition \ref{def:DIDO-c-inner}. Perhaps the most important aspect of a Birkhoff PS discretization is that it offers a ``flat'' condition number of the resulting linear system\cite{birk-koeppen, wang}; i.e., an $\mathcal{O}(1)$ growth with respect to $N$. In contrast, the condition number of a Lagrange PS discretization, be it Gauss, Gauss-Radau or Gauss-Lobatto, grows as $\mathcal{O}(N^2)$ no matter the choice of the orthogonal polynomial\cite{birk-koeppen,birk-TN}.  Because condition number affects both accuracy and computational speed, its combination with its cotangent tunneling capability makes a Birkhoff PS discretization a doubly potent method to generate accelerated algorithms.


\subsubsection{\textbf{The Accuracy Component}}
Technically, this component of DIDO is the simplest; however, its computer implementation requires a significant amount of skill in practical and algorithmic matters.  This is because the accuracy component of DIDO's algorithmic suite is handed a near-final solution; hence, its ``only'' technical task is to deliver the final solution.  
This point suggests that an implementation of the accuracy component can be a simple quasi-Newton algorithm (or its accelerated version as presented in \cite{ross-accel}) for solving $(P^{\lambda, D, N_k})$ for the last few values of $N_k$.  While the preceding arguments are technically correct, the main implementation challenge here is in ensuring that the momentum gained from the acceleration component is not lost in achieving the targeted accuracy requested by the user.  In achieving its stated goals, the iterations of this algorithm are largely focused on generating the targeted accuracy of satisfying the Pontryagin necessary conditions.  As noted earlier in Sec.~\ref{sec:overview}, this component of DIDO tightens the tolerances on the Hamiltonian minimization condition that may not have been met at the termination of the accelerated algorithm.  This procedure is done sequentially to maintain dual feasibility that is consistent with the value of $N = N_f$.  The mathematical details of this process are described in \cite{PSReview-ARC-2012,arb-grid-aas,arb-grid,cheby:+cmt}.

\subsection{An Overview of the Nominal, True-Fast, Spectral Algorithmic Flow}
Because the nominal mode of DIDO is guess-free; i.e., a user does not provide a guess, the first element of the stabilization component is invoked to generate the first element of the DIDO sequence given by $(X, U, V, \Lambda, \Omega)^{N_0}_{m_0}$; see \eqref{eq:dido-c-sequence}.  The acceleration component of DIDO rapidly iterates this point to completion or near-completion denoted by $(X, U, V, \Lambda, \Omega)^{N_a}_{m_a}$.  If this point satisfies all the termination criterion, the main algorithm terminates; otherwise, $(X, U, V, \Lambda, \Omega)^{N_a}_{m_a}$  is handed off to the accurate component which iterates it to $(X, U, V, \Lambda, \Omega)^{N_f}_{m_f}$.  Because $(V, \Omega)^{N_f}_{m_f}$ are internal variables, DIDO discards these virtual primal and dual variables.  The remainder of the variables that support a solution to $P^{\lambda, N_f}$ are passed to the user through the use of  \textsf{primal} and \textsf{dual} structures described in Sec.~\ref{sec:DIDO-i/o}.  For  $P^{\lambda, N_f}$ these variables are passed according to,
\begin{subequations}\label{eq:primal+dual4PN}
\begin{align}
&&&\textsf{primal}.\textbf{states} & = && X^{N_f} && \\
&&&\textsf{primal}.\textbf{controls} & = && U^{N_f} && \\
&&&\textsf{dual}.\textbf{dynamics} & = && \Lambda^{N_f} &&
\end{align}
\end{subequations}
\textbf{\emph{It is clear from the entirety of the preceding discussions that most of DIDO's algorithms are completely agnostic to the specifics of a PS discretization.}} Consequently, they can be adapted to almost any method of discretization!

\end{document}